\theoremstyle{plain}
  \newtheorem{theorem}{Theorem}[section]
  \newtheorem{lemma}[theorem]{Lemma}
  \newtheorem{conjecture}[theorem]{Conjecture}
  \newtheorem{corollary}[theorem]{Corollary}
  \newtheorem{proposition}[theorem]{Proposition}
\theoremstyle{definition}
  \newtheorem{example}[theorem]{Example}
  \newtheorem{remark}[theorem]{Remark}
\newenvironment{acknowledgements}{\bigskip\textbf{Acknowledgements.}}{}
\newcommand{\mathd}{\mathrm{d}}
\newcommand{\tmem}[1]{{\em #1\/}}
\newcommand{\tmop}[1]{\ensuremath{\operatorname{#1}}}
\newcommand{\tmtextit}[1]{{\itshape{#1}}}
\newcommand{\um}{-}
\newcommand{\upl}{+}
\newcommand{\upm}{\pm}
\newcommand{\mathbbm}[1]{\mathbb{#1}}
\newcommand{\assign}{:=}
\newcommand{\sumprime}{\mathop{\sum\nolimits'}}
\renewcommand{\pmod}[1]{\,(\textup{mod}\,#1)}
\title[On a secant Dirichlet series]%
{On a secant Dirichlet series and Eichler integrals of Eisenstein series}
\subjclass[2010]{Primary 11F11, 33E20; Secondary 11L03, 33B30}
\author{Bruce C. Berndt}
\thanks{The first author's research was partially supported by NSA grant
H98230-11-1-0200.}
\address{Department of Mathematics,
University of Illinois at Urbana-Champaign,
1409 W. Green St,
Urbana, IL 61801,
United States}
\email{berndt@illinois.edu}
\author{Armin Straub}
\address{Department of Mathematics,
University of Illinois at Urbana-Champaign,
1409 W. Green St,
Urbana, IL 61801,
United States}
\curraddr{Max-Planck-Institut f\"ur Mathematik,
Vivatsgasse 7,
53111 Bonn,
Germany}
\email{astraub@illinois.edu}
\date{June 7, 2014}
\begin{document}
\maketitle

\begin{abstract}
  We consider, for even $s$, the secant Dirichlet series $\psi_s (\tau) =
  \sum_{n = 1}^{\infty} \frac{\sec (\pi n \tau)}{n^s}$, recently introduced
  and studied by Lal\'{\i}n, Rodrigue and Rogers. In particular, we show, as
  conjectured and partially proven by Lal\'{\i}n, Rodrigue and Rogers, that
  the values $\psi_{2 m} ( \sqrt{r})$, with $r > 0$ rational, are rational
  multiples of $\pi^{2 m}$. We then put the properties of the secant Dirichlet
  series into context by showing that they are Eichler integrals of odd weight
  Eisenstein series of level $4$. This leads us to consider Eichler integrals
  of general Eisenstein series and to determine their period polynomials. In
  the level $1$ case, these polynomials were recently shown by Murty, Smyth
  and Wang to have most of their roots on the unit circle. We provide evidence
  that this phenomenon extends to the higher level case. This observation
  complements recent results by Conrey, Farmer and Imamoglu as well as
  El-Guindy and Raji on zeros of period polynomials of Hecke eigenforms in the
  level $1$ case. Finally, we briefly revisit results of a similar type in the
  works of Ramanujan.
\end{abstract}

\section{Introduction}

Our considerations begin with the secant Dirichlet series
\begin{equation}
  \psi_s (\tau) = \sum_{n = 1}^{\infty} \frac{\sec (\pi n \tau)}{n^s},
  \label{eq:defsz}
\end{equation}
which were recently introduced and studied by Lal\'{\i}n, Rodrigue and Rogers
{\cite{lrr-secantzeta}}. One of the motivations for considering these sums is
their similarity in shape and, as we will see, in properties to the cotangent
Dirichlet series
\begin{equation}
  \xi_s (\tau) = \sum_{n = 1}^{\infty} \frac{\cot (\pi n \tau)}{n^s} .
  \label{eq:defcz}
\end{equation}
For instance, as first proved by Lerch and also recorded by Ramanujan (see
Section \ref{sec:history} as well as {\cite{berndt-modular77}} or {\cite[p.
276]{berndtII}}) the difference $\tau^{2 m} \xi_{2 m + 1} (- 1 / \tau) -
\xi_{2 m + 1} (\tau)$ is a rational function in $\tau$. This modular
functional equation and its ramifications continue to inspire research to this
date, including, for instance, {\cite{gun-murty-rath2011}},
{\cite{msw-zeroes}}, {\cite{rivoal-conv12}}, {\cite{ls-roots12}}.

As shown in {\cite{lrr-secantzeta}}, the secant Dirichlet series $\psi_{2 m}
(\tau)$ satisfy modular functional equations as well. In Section
\ref{sec:funres}, we give an alternative derivation of these functional
equations based on the residue theorem and in the spirit of
{\cite{berndt-dedekindsum76}}. In this way, we obtain a compact representation
of the associated rational function as a certain Taylor coefficient of a
quotient of trigonometric functions. We then show in Section \ref{sec:sqrt},
as conjectured and partially proven in {\cite{lrr-secantzeta}}, that, for a
positive rational number $r$, $\psi_{2 m} ( \sqrt{r})$ is a rational multiple
of $\pi^{2 m}$.

In Section \ref{sec:eisenstein}, we observe that the $2 m$th derivative of
$\psi_{2 m} (2 \tau)$ is, up to a constant term, an Eisenstein series of
weight $2 m + 1$, level $4$ and character $\chi_{- 4}$. In other words, the
secant Dirichlet series $\psi_{2 m}$ are Eichler integrals of Eisenstein
series; the basic theory of Eichler integrals is reviewed in Section
\ref{sec:eichler}. In this light, several of the properties of the function
$\psi_{2 m}$ become natural and, in Section \ref{sec:periods}, we use the
modular setting to give another derivation of the functional equation by
evaluating the period polynomial of the corresponding Eisenstein series.

In fact, the computation of the period polynomials in Section
\ref{sec:periods} is carried out more generally for Eisenstein series
corresponding to pairs of Dirichlet characters. A special case of this
computation allows us, for instance, to rederive the Ramanujan-style formulas
for Dirichlet $L$-values of {\cite{katayama-L74}}. The resulting period
polynomials mirror the well-known polynomials occuring in the level $1$ case,
studied, for instance, in {\cite{gun-murty-rath2011}} and {\cite{msw-zeroes}},
where they (more accurately, their odd parts) are referred to as the
{\tmem{Ramanujan polynomials}}. In {\cite{msw-zeroes}} it was shown that the
Ramanujan polynomials are nearly unimodular, that is, all their nonreal roots
lie on the unit circle. On the other hand, it was conjectured in
{\cite{lr-unimod13}}, and proven in {\cite{ls-roots12}}, that the full period
polynomial is unimodular in the level $1$ case, and this property is also
shown to hold for several related polynomials. We indicate in Section
\ref{sec:rpx} that, after a linear change of variables, the period polynomials
in higher level appear to have all or most of their roots on the unit circle
as well. This observation fits well with and complements the recent result of
{\cite{cfi-ppz}} and {\cite{er-ppz13}}, where it is shown that the nontrivial
zeros of period polynomials of modular forms, which are Hecke eigenforms of
level $1$, all lie on the unit circle. As an application, we derive formulas
for Dirichlet $L$-values in terms of values of an Eichler integral at
algebraic arguments of modulus $1$, thus generalizing the formulas for $\zeta
(2 k + 1)$ studied in {\cite{gun-murty-rath2011}}.

Finally, in Section \ref{sec:history}, we return to and discuss related
entries in Ramanujan's notebook {\cite{berndtII}}, including the particularly
famous entry corresponding to the cotangent Dirichlet series (\ref{eq:defcz}),
which, for odd $s$, are the Eichler integrals of the Eisenstein series for the
full modular group. In particular, we close by demonstrating that the
functional equation for $\psi_{2 m} (\tau)$ is in fact a consequence of an
identity stated by Ramanujan.

It is on purpose, and hopefully to the benefit of some readers, that the
considerations in this paper start out entirely explicit and elementary, with
theoretical background, such as Eisenstein series of higher level and Eichler
integrals, being included as we proceed. As a consequence of this approach,
some of the earlier results can be obtained as special cases of later results.

Convergence of series such as (\ref{eq:defsz}) and (\ref{eq:defcz}), when
$\tau$ is a real number, is a rather subtle issue; see, for instance,
{\cite{rivoal-conv12}}. It is shown in {\cite{lrr-secantzeta}} that $\psi_s
(\tau)$, for $s \geqslant 2$, converges absolutely for rational $\tau$ with
odd denominator as well as for algebraic irrational $\tau$. On the other hand,
the series converges absolutely for all nonreal $\tau$, and our discussion of
the modular properties proceeds under the tacit assumption that $\tau$ is not
real. In the case of evaluations of $\psi_{2 m} (\tau)$ at real quadratic
$\tau$ in Section \ref{sec:sqrt}, one may then use limiting arguments to show
that the functional equations also hold for these arguments.

\section{The functional equation of the secant Dirichlet series via
residues}\label{sec:funres}

Obviously, $\psi_s (\tau)$ is periodic of period $2$, that is $\psi_s (\tau +
2) = \psi_s (\tau)$. In this section, we present an alternative proof of the
additional nontrivial functional equation satisfied by (\ref{eq:defsz}) in the
case $s = 2 m$. In {\cite{lrr-secantzeta}} this functional equation was
established by skillfully and carefully convoluting partial fraction
expansions. The proof given here is based on the residue theorem and is in the
spirit of the proof given in {\cite{berndt-dedekindsum76}} for the cotangent
Dirichlet series (\ref{eq:defcz}). Throughout, we denote with $[z^n] f (z)$
the $n$th coefficient of the Taylor expansion of $f (z)$.

\begin{theorem}
  \label{thm:fun}Let $m > 0$ be an integer. Then we have
  \begin{eqnarray}
    &  & \left( 1 + \tau \right)^{2 m - 1} \psi_{2 m} \left( \frac{\tau}{1 +
    \tau} \right) - \left( 1 - \tau \right)^{2 m - 1} \psi_{2 m} \left(
    \frac{\tau}{1 - \tau} \right) \nonumber\\
    & = & \pi^{2 m}  \left[ z^{2 m - 1} \right]  \frac{\sin (\tau z)}{\sin (
    \left( 1 - \tau \right) z) \sin ( \left( 1 + \tau \right) z)} . 
    \label{eq:phifunsym}
  \end{eqnarray}
\end{theorem}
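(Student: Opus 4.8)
The plan is to prove the functional equation by the residue theorem, following the spirit of Berndt's treatment of the cotangent Dirichlet series. The central object will be a suitable meromorphic function $f_z(w)$ of a complex variable $w$, depending on the auxiliary parameter $z$, whose poles fall into two classes: the integer translates coming from a factor like $\pi/\sin(\pi w)$ (or $\cot(\pi w)$), which upon summing residues reproduce the secant Dirichlet series, and a finite set of poles contributed by the trigonometric quotient on the right-hand side of \eqref{eq:phifunsym}. First I would introduce the kernel
\begin{equation*}
  f_z (w) = \frac{\pi \cot (\pi w)}{w^{2 m - 1}} \cdot \frac{\sin (\tau z)}{\sin ((1 - \tau) w z) \sin ((1 + \tau) w z)},
\end{equation*}
or a close variant thereof, chosen so that the residues at the poles $w = n/z$ generate the secant terms $\sec(\pi n \tau)$ after the substitutions $\tau \mapsto \tau/(1\pm\tau)$ that produce the two $\psi_{2m}$ expressions on the left.

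Next I would integrate $f_z(w)$ over a large contour, for instance a sequence of expanding circles or rectangles avoiding the poles, and argue that the contour integral tends to $0$ as the contour grows (this is where the cotangent factor's boundedness away from the real axis, together with the exponential growth of the $\sin$ factors in the denominator, should be exploited). By the residue theorem, the vanishing of the limiting contour integral forces the sum of all residues to be zero. I would then split this total into the residues at the lattice of poles from $\cot(\pi w)$ — which, after reindexing and using the periodicity and the secant partial-fraction structure, assemble into $(1+\tau)^{2m-1}\psi_{2m}(\tau/(1+\tau)) - (1-\tau)^{2m-1}\psi_{2m}(\tau/(1-\tau))$ — and the residues at the finitely many poles of the trigonometric quotient, which produce the Taylor-coefficient term $\pi^{2m}[z^{2m-1}]\{\sin(\tau z)/(\sin((1-\tau)z)\sin((1+\tau)z))\}$.

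The identification of these two groups of residues is the heart of the argument. The residue at $w=0$ is an order-$2m$ pole, so its computation amounts to extracting a Taylor coefficient of the analytic part, and I expect precisely here the bracketed coefficient $[z^{2m-1}]$ on the right-hand side to emerge; tracking the factor $\pi^{2m}$ and the sign conventions through this expansion will require care. The residues at the other zeros of $\sin((1\pm\tau)wz)$ should, after summation, reorganize into the two secant series evaluated at the transformed arguments, using $\sec(\theta) = 1/\cos(\theta)$ and the addition formulas relating $\cos(\pi n \tau/(1\pm\tau))$ to the pole data. The main obstacle, and the step I would budget the most effort for, is the bookkeeping at the higher-order pole $w=0$: correctly organizing the Laurent expansion so that the contribution collapses to a single Taylor coefficient, and verifying that the spurious lower-order terms cancel against analogous contributions from the lattice poles. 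A secondary technical point is justifying the vanishing of the contour integral uniformly, which constrains the admissible range of $\tau$ (non-real $\tau$, as the introduction notes) and is later extended to real quadratic arguments by the limiting argument mentioned in the paper.
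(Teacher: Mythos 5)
Your overall strategy --- the residue theorem applied to a trigonometric kernel, integrated over expanding circles kept at a fixed positive distance from all poles, with the vanishing of the limiting integral forcing the sum of residues to zero --- is exactly the paper's method, and your hedged phrase ``or a close variant thereof'' points in the right direction. But the kernel you propose and your accounting of its poles contain structural errors that would derail the execution. The paper integrates
\begin{equation*}
  I_N \;=\; \frac{1}{2 \pi i} \int_{C_N} \frac{\sin \bigl( \pi \tfrac{b - a}{2} z \bigr)}{\sin (\pi a z) \sin (\pi b z)}\, \frac{\mathd z}{z^{2 m}},
  \qquad a = 1 - \tau,\quad b = 1 + \tau,
\end{equation*}
with \emph{no} cotangent factor and, crucially, no auxiliary parameter: the integration variable is the very $z$ in which the coefficient is extracted, so the residue at the origin is, by definition of the residue of $f (z) / z^{2 m}$, literally the Taylor coefficient $[z^{2 m - 1}]$ of the trigonometric quotient --- no Laurent-expansion bookkeeping or cancellation of ``spurious lower-order terms'' is needed. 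The secant series do \emph{not} come from an integer lattice of cotangent poles; they come from the infinitely many simple zeros $z = n / a$ and $z = n / b$ of the two denominator sines, where the identity $\cos x \sin y - \cos y \sin x = 2 \cos \bigl( \tfrac{y - x}{2} \bigr) \sin \bigl( \tfrac{y - x}{2} \bigr)$, applied with $x = \pi n$, $y = \pi n b / a$, converts the residue into $\tfrac{a^{2m-1}}{2\pi}\sec \bigl( \pi n \tfrac{b - a}{2 a} \bigr)/n^{2m}$; since $\tfrac{b - a}{2 a} = \tfrac{\tau}{1 - \tau}$ and $\tfrac{b - a}{2 b} = \tfrac{\tau}{1 + \tau}$, these two progressions sum precisely to the left side of \eqref{eq:phifunsym}.

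Measured against this, three steps in your proposal would fail. First, treating $z$ as a parameter separate from the integration variable $w$ breaks the coefficient extraction: for each fixed $z$ the residue identity holds as an identity in $z$, the residue at $w = 0$ is then a full function of $z$, each lattice residue carries its own $z$-dependence, and nothing isolates the single coefficient $[z^{2 m - 1}]$. Second, the factor $\pi \cot (\pi w)$ is superfluous and its residues do the wrong thing: at $w = n$ one gets $n^{-(2m-1)}$ times the trigonometric quotient at integer multiples of its argument, and since $\sin (\pi n (1 - \tau)) \sin (\pi n (1 + \tau)) = - \sin^2 (\pi n \tau)$, such terms involve \emph{cosecants} of $\pi n \tau$, not secants at the transformed arguments $\tau / (1 \pm \tau)$; no substitution repairs this. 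Third, your claim that the trigonometric quotient contributes only ``a finite set of poles'' is false --- each denominator sine contributes an infinite arithmetic progression --- and this shows you have the roles of the two pole classes reversed: in the correct proof the infinitely many sine-zeros yield the series and the origin alone yields the coefficient term. (A minor symptom of the same confusion: the numerator $\sin (\tau z)$ in your kernel does not depend on $w$ at all.)
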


\begin{proof}
  Let $s > 0$ be an integer and consider
  \begin{equation}
    I_N = \frac{1}{2 \pi i} \int_{C_N} \frac{\sin \left( \pi \frac{b - a}{2} z
    \right)}{\sin (\pi a z) \sin (\pi b z)}  \frac{\mathd z}{z^{s + 1}},
    \label{eq:IN}
  \end{equation}
  where $C_N$ is a positively oriented circle of radius $R_N$ centered at the
  origin. As in {\cite{berndt-dedekindsum76}}, the radii $R_N$ are chosen such
  that the points on the circle are always at least some fixed positive
  distance from any of the points $\frac{n}{a}$ and $\frac{n}{b}$, where $n$
  ranges over the integers. It is then easily seen that
  \begin{equation}
    \lim_{N \rightarrow \infty} I_N = 0. \label{eq:IN0}
  \end{equation}
  The integrand of (\ref{eq:IN}) has poles at $z = 0$ as well as at $z =
  \frac{n}{a}$ and $z = \frac{n}{b}$ for $n \in \mathbbm{Z}$. Writing
  $\tmop{Res} (\alpha)$ for the residue of the integrand at $z = \alpha$, we
  have
  \[ \tmop{Res} \left( \frac{n}{a} \right) = a^s  \frac{(- 1)^n}{\pi} 
     \frac{1}{n^{s + 1}}  \frac{\sin \left( \pi n \frac{b - a}{2 a}
     \right)}{\sin \left( \pi n \frac{b}{a} \right)} = \frac{a^s}{2 \pi} 
     \frac{\sec \left( \pi n \frac{b - a}{2 a} \right)}{n^{s + 1}}, \]
  where the last equality is obtained by writing $(- 1)^n = \frac{1}{\cos (\pi
  n)}$ and using the trigonometric identity
  \[ \cos (x) \sin (y) - \cos (y) \sin (x) = 2 \cos \left( \frac{y - x}{2}
     \right) \sin \left( \frac{y - x}{2} \right), \]
  with $x = \pi n$ and $y = \pi n \frac{b}{a}$. By symmetry,
  \[ \tmop{Res} \left( \frac{n}{b} \right) = - \frac{b^s}{2 \pi}  \frac{\sec
     \left( \pi n \frac{b - a}{2 b} \right)}{n^{s + 1}} . \]
  For the residue at the origin, we find that
  \[ \tmop{Res} (0) = \left[ z^s \right]  \frac{\sin \left( \pi \frac{b -
     a}{2} z \right)}{\sin (\pi a z) \sin (\pi b z)} = \pi^s \left[ z^s
     \right]  \frac{\sin \left( \frac{b - a}{2} z \right)}{\sin (a z) \sin (b
     z)} = : \pi^s p_s (a, b) . \]
  We now assume that $s$ is odd. Collecting residues and letting $N
  \rightarrow \infty$, we obtain, using (\ref{eq:IN0}),
  \[ 0 = \frac{a^s}{\pi} \sum_{n = 1}^{\infty} \frac{\sec \left( \pi n \frac{b
     - a}{2 a} \right)}{n^{s + 1}} - \frac{b^s}{\pi} \sum_{n = 1}^{\infty}
     \frac{\sec \left( \pi n \frac{b - a}{2 b} \right)}{n^{s + 1}} + \pi^s p_s
     (a, b) . \]
  Finally, let $a = 1 - \tau$ and $b = 1 + \tau$, and write $s = 2 m - 1$, to
  obtain the functional equation in the form
  \[ (1 + \tau)^{2 m - 1} \psi_{2 m} \left( \frac{\tau}{1 + \tau} \right) - (1
     - \tau)^{2 m - 1} \psi_{2 m} \left( \frac{\tau}{1 - \tau} \right) =
     \pi^{2 m} p_{2 m - 1} (1 - \tau, 1 + \tau), \]
  as claimed.
\end{proof}

Note that, upon replacing $\tau \rightarrow \frac{\tau}{\tau + 1}$ and
simplifying slightly, we find that equation (\ref{eq:phifunsym}) becomes
\begin{equation}
  (2 \tau + 1)^{2 m - 1} \psi_{2 m} \left( \frac{\tau}{2 \tau + 1} \right) -
  \psi_{2 m} (\tau) = \pi^{2 m}  \left[ z^{2 m - 1} \right]  \frac{\sin (\tau
  z)}{\sin (z) \sin ((2 \tau + 1) z)}, \label{eq:phifun}
\end{equation}
which, while less symmetric, makes the modular transformation property more
apparent.

\begin{remark}
  The rational functions on the right-hand sides of (\ref{eq:phifunsym}) and
  (\ref{eq:phifun}) can be made more explicit by expanding the sine functions.
  Namely, if we expand the two sine functions in the denominator using the
  defining generating function,
  \begin{equation}
    \frac{z e^{x z}}{e^z - 1} = \sum_{n \geqslant 0} B_n (x) \frac{z^n}{n!},
    \label{eq:B}
  \end{equation}
  of the Bernoulli polynomials $B_n (x)$, then the rational functions are seen
  to be equal to double sums such as
  \[ \left[ z^{2 m - 1} \right]  \frac{\sin \left( \frac{b - a}{2} z
     \right)}{\sin (a z) \sin (b z)} = \frac{(- 1)^m}{a b} \sum_{k + n + r =
     m} \frac{\left( \frac{b - a}{2} \right)^{2 k + 1} B_{2 n} ( \tfrac{1}{2})
     B_{2 r} ( \tfrac{1}{2}) (2 a)^{2 n} (2 b)^{2 r}}{(2 k + 1) ! (2 n) ! (2
     r) !}, \]
  where the sum is over nonnegative integers $k, n, r$ summing to $m$. We
  remark that $B_n ( \tfrac{1}{2}) = - (1 - 2^{1 - n}) B_n$, where $B_n = B_n
  (0)$ is the $n$th Bernoulli number. Note that this shows, in particular,
  that the right-hand sides of equations (\ref{eq:phifunsym}) and
  (\ref{eq:phifun}) are rational functions of the form $\pi^{2 m} p_{2 m}
  (\tau) / (1 - \tau^2)$ and $\pi^{2 m} q_{2 m} (\tau) / (2 \tau + 1)$,
  respectively, where $p_{2 m}$ and $q_{2 m}$ are polynomials of degree $2 m +
  1$ with rational coefficients.
  
  In fact, as we will see from versions of the functional equation derived
  later in this paper, the mentioned double sums can be reduced to single
  summations; see Remark \ref{rk:phifun-12}.
\end{remark}

\section{The secant Dirichlet series at real quadratic
irrationals}\label{sec:sqrt}

In {\cite{lrr-secantzeta}} it was conjectured and partially proven that
$\psi_{2 m} ( \sqrt{k})$ is a rational multiple of $\pi^{2 m}$ whenever $m$
and $k$ are positive integers. In this section, we prove this assertion and
extend it to the case when $k$ is a rational number. For the sake of
simplicity, we first prove, in Theorem \ref{thm:phisqrtk}, the case of
integral $k$ and then briefly indicate how the more general case, stated in
Theorem \ref{thm:phisqrtr}, follows in essentially the same way. We point out
that an independent proof, based on the theory of generalized
$\eta$-functions, of Theorem~\ref{thm:phisqrtr} has been given by
P.~Charollois and M.~Greenberg {\cite{cg-secant}}. We thank M.~Lal\'{\i}n, who
received preprints of both {\cite{cg-secant}} and this paper within a couple
of days, for making us aware of this reference.

Consider the matrices
\begin{equation}
  A = \left(\begin{array}{cc}
    1 & 2\\
    0 & 1
  \end{array}\right), \hspace{1em} B = \left(\begin{array}{cc}
    1 & 0\\
    2 & 1
  \end{array}\right) . \label{eq:AB}
\end{equation}
As usual, we denote by $\Gamma (2)$ the congruence subgroup
\[ \Gamma (2) = \left\{ \gamma \in \tmop{SL}_2 (\mathbbm{Z}) : \hspace{1em}
   \gamma \equiv I \pmod{2} \right\}, \]
where $I$ is the identity matrix. The group $\Gamma (2)$ is generated by $A$,
$B$ and $- I$; see, for instance, {\cite[Exercise II.6, p.34]{yoshida-hyp}}.

\begin{remark}
  We note that, as is well-known, $\Gamma (2)$ is conjugate to $\Gamma_0 (4)$
  under $\tau \mapsto 2 \tau$, and that in this setting the group $\langle A,
  B \rangle$ corresponds to $\Gamma_1 (4)$.
\end{remark}

In terms of the matrices $A$ and $B$, we then have
\begin{equation}
  \psi_{2 m} (A \tau) = \psi_{2 m} (\tau), \hspace{1em} \psi_{2 m} (B \tau) =
  \frac{1}{(2 \tau + 1)^{2 m - 1}} \psi_{2 m} (\tau) + \pi^{2 m} f_{2 m}
  (\tau) \label{eq:funAB},
\end{equation}
where $f_{2 m} (\tau)$, given in (\ref{eq:phifun}), is a rational function
over $\mathbbm{Q}$.

Note that the first equation in (\ref{eq:funAB}) simply expresses the
periodicity of $\psi_{2 m}$, that is $\psi_{2 m} (\tau + 2) = \psi_{2 m}
(\tau)$, while the second one is the functional equation (\ref{eq:phifun})
discussed in the previous section.

\begin{theorem}
  \label{thm:phisqrtk}Let $m$ and $k$ be positive integers. Then
  \[ \psi_{2 m} ( \sqrt{k}) \in \pi^{2 m} \mathbbm{Q}. \]
\end{theorem}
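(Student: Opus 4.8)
The plan is to exploit the functional equations in (\ref{eq:funAB}) together with the fact that $\sqrt{k}$ is a fixed point of a suitable element of the group $\langle A, B\rangle$ (equivalently, that $\sqrt{k}$ generates a real quadratic field whose units act on $\tau$ via these matrices). The key observation is that $\psi_{2m}$ transforms nicely under $A$ and $B$: under $A$ it is invariant, and under $B$ it picks up a factor $(2\tau+1)^{-(2m-1)}$ together with an additive rational-function-times-$\pi^{2m}$ term. Since $A$ and $B$ generate $\langle A,B\rangle \cong \Gamma_1(4)$ (after the conjugation $\tau\mapsto 2\tau$ noted in the Remark), any word $\gamma$ in these generators gives a transformation of the form
\[
\psi_{2m}(\gamma\tau) = \frac{1}{(c\tau+d)^{2m-1}}\psi_{2m}(\tau) + \pi^{2m} g_\gamma(\tau),
\]
where $g_\gamma$ is a rational function over $\mathbb{Q}$ obtained by composing and accumulating the rational correction terms; this follows by induction on the word length, since each application of $A$ or $B$ contributes only a $\mathbb{Q}$-rational-function summand and multiplies the existing one by a rational function in $\tau$.

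**The crux** is then to find a nontrivial $\gamma\in\langle A,B\rangle$ fixing $\tau_0 = \sqrt{k}$. Geometrically, $\sqrt{k}$ is a real quadratic irrational, so it is fixed by some hyperbolic element of $\mathrm{SL}_2(\mathbb{Z})$ coming from the fundamental unit of $\mathbb{Q}(\sqrt{k})$ (Pell's equation); one must check that a suitable power of this hyperbolic element actually lies in the finite-index subgroup $\langle A,B\rangle$, which holds because any finite-index subgroup contains a power of any given element. Writing $\gamma = \left(\begin{smallmatrix} a & b \\ c & d\end{smallmatrix}\right)$ for such a fixing element, the transformation law evaluated at $\tau_0$ gives
\[
\psi_{2m}(\tau_0) = \frac{1}{(c\tau_0+d)^{2m-1}}\psi_{2m}(\tau_0) + \pi^{2m} g_\gamma(\tau_0).
\]
Provided the factor $(c\tau_0+d)^{2m-1}$ is not equal to $1$ — which must be argued, e.g. from $\gamma$ being hyperbolic so that $|c\tau_0+d|\neq 1$, or at least from ruling out the degenerate exponent — we may solve for $\psi_{2m}(\tau_0)$ to get
\[
\psi_{2m}(\tau_0) = \pi^{2m}\,\frac{g_\gamma(\tau_0)}{1 - (c\tau_0+d)^{-(2m-1)}}.
\]

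**The final step** is to verify that the right-hand side lies in $\pi^{2m}\mathbb{Q}$. Here one uses that $\tau_0 = \sqrt{k}$ generates the quadratic field $\mathbb{Q}(\sqrt{k})$, that $c\tau_0+d\in\mathbb{Q}(\sqrt{k})$, and that $g_\gamma$ has rational coefficients so $g_\gamma(\tau_0)\in\mathbb{Q}(\sqrt{k})$; the quotient therefore lies a priori in $\mathbb{Q}(\sqrt{k})$. To pin it down to $\mathbb{Q}$, I would invoke a symmetry or conjugation argument: since $\psi_{2m}(\sqrt{k})$ is real and the construction is symmetric under $\sqrt{k}\mapsto-\sqrt{k}$ (note $\psi_{2m}$ is even, as $\sec$ is even and the defining series in (\ref{eq:defsz}) is unchanged under $\tau\mapsto-\tau$), the Galois conjugate expression yields the same value, forcing the coefficient of $\sqrt{k}$ to vanish and leaving a rational multiple of $\pi^{2m}$.

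**The main obstacle** I anticipate is the bookkeeping needed to guarantee that the fixing element $\gamma$ can be chosen inside $\langle A,B\rangle$ with $(c\tau_0+d)^{2m-1}\neq 1$, and, more delicately, ensuring the denominator $1-(c\tau_0+d)^{-(2m-1)}$ does not vanish; the convergence subtleties for real $\tau$ flagged in the introduction mean one should justify the functional equations at $\tau_0=\sqrt{k}$ by the limiting argument mentioned there, approaching $\sqrt{k}$ from the upper half-plane rather than evaluating the (conditionally convergent) series directly.
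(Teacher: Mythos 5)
Your proposal is correct and follows essentially the same route as the paper: a Pell-equation matrix $D$ fixing $\sqrt{k}$, a power of which lies in the group generated by $A$ and $B$ (the paper verifies directly that $D^2 \in \Gamma(2) = \langle A, B, -I\rangle$, where you invoke the general fact that a finite-index subgroup contains a power of every element), followed by solving the accumulated functional equation at the fixed point and using the evenness of $\psi_{2m}$ together with the Galois symmetry $\sqrt{k} \mapsto -\sqrt{k}$ to force the value $g(\sqrt{k}) \in \mathbb{Q}(\sqrt{k})$ to be rational. The nonvanishing of $1 - (c\sqrt{k}+d)^{-(2m-1)}$ that you flag is settled just as you suggest: the fixing element has $c \neq 0$, so $c\sqrt{k}+d$ is a real irrational (indeed a unit of infinite order), and no odd power of it can equal $1$.
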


\begin{proof}
  Following {\cite{lrr-secantzeta}}, we observe that if the integers $X$ and
  $Y$ solve Pell's equation
  \begin{equation}
    X^2 - k Y^2 = 1, \label{eq:pell}
  \end{equation}
  then $D \cdot \sqrt{k} = \sqrt{k}$, where $D$ is the matrix
  \[ D = \left(\begin{array}{cc}
       X & k Y\\
       Y & X
     \end{array}\right) \in \tmop{SL}_2 (\mathbbm{Z}) . \]
  Here, and in the sequel, we let $2 \times 2$ matrices act on complex numbers
  by linear fractional transformations and write
  \[ \left(\begin{array}{cc}
       a & b\\
       c & d
     \end{array}\right) \cdot x = \frac{a x + b}{c x + d} . \]
  A proof that for every positive nonsquare $k$ there exist nontrivial
  solutions $X$, $Y$ to Pell's equation (\ref{eq:pell}) was first published by
  Lagrange in 1768 {\cite{lagrange-pell}}. For further information and
  background on Pell's equation we refer to {\cite{lenstra-pell}}.
  
  We now make the simple observation that, by (\ref{eq:pell}),
  \[ D^2 = \left(\begin{array}{cc}
       X & k Y\\
       Y & X
     \end{array}\right)^2 = \left(\begin{array}{cc}
       X^2 + k Y^2 & 2 k X Y\\
       2 X Y & X^2 + k Y^2
     \end{array}\right) \in \Gamma (2) . \]
  Hence, we can always find $C \in \left\langle A, B, - I \right\rangle =
  \Gamma (2)$, $C \neq \pm I$, such that $C \cdot \sqrt{k} = \sqrt{k}$. Let $C
  = \left(\begin{array}{cc}
    a & b\\
    c & d
  \end{array}\right)$ be such a matrix. Note that necessarily $c \neq 0$.
  
  Repeatedly applying the functional equations (\ref{eq:funAB}), we deduce
  that
  \[ \psi_{2 m} ( \sqrt{k}) = \psi_{2 m} (C \cdot \sqrt{k}) = \frac{1}{(c
     \sqrt{k} + d)^{2 m - 1}} \psi_{2 m} ( \sqrt{k}) + \pi^{2 m} f_{2 m, C} (
     \sqrt{k}), \]
  where $f_{2 m, C} (\tau) \in \mathbbm{Q} (\tau)$ is a rational function with
  rational coefficients. Consequently,
  \[ \psi_{2 m} ( \sqrt{k}) = \pi^{2 m} g_k ( \sqrt{k}) \]
  for some rational function $g_k (\tau) \in \mathbbm{Q} (\tau)$. On the other
  hand, the above argument applied to $- \sqrt{k}$ shows that $\psi_{2 m} (-
  \sqrt{k}) = \pi^{2 m} g_k (- \sqrt{k})$. Since $\psi_{2 m}$ is even, it
  follows that
  \[ g_k ( \sqrt{k}) = g_k (- \sqrt{k}), \]
  which implies that $g_k ( \sqrt{k})$ is a rational number.
\end{proof}

\begin{example}
  We now illustrate Theorem \ref{thm:phisqrtk} by evaluating $\psi_2 (
  \sqrt{2})$. In this case, the fundamental solution to Pell's equation
  (\ref{eq:pell}) is given by $(X, Y) = (3, 2)$, obtained from the first two
  terms of the continued fraction of $\sqrt{2}$. Then
  \[ C = \left(\begin{array}{cc}
       3 & 4\\
       2 & 3
     \end{array}\right) = - A B^{- 1} A, \]
  and, indeed, one easily verifies that $\sqrt{2}$ is fixed under $A B^{- 1}
  A$. In the present case $s = 1$, and the transformations (\ref{eq:funAB})
  satisfied by $\psi_2$ take the form
  \begin{equation}
    \psi_2 (A \tau) = \psi_2 (\tau), \hspace{1em} \psi_2 (B \tau) = \frac{1}{2
    \tau + 1} \psi_2 (\tau) + \pi^2 f_2 (\tau), \label{eq:fun2AB}
  \end{equation}
  with
  \[ f_2 (\tau) = \frac{\tau (3 \tau^2 + 4 \tau + 2)}{6 (2 \tau + 1)^2} . \]
  We therefore find that
  \begin{eqnarray*}
    \psi_2 (A B^{- 1} A \tau) & = & \psi_2 (B^{- 1} A \tau)\\
    & = & (2 B^{- 1} A \tau + 1) \left[ \psi_2 (A \tau) - \pi^2 f_2 (B^{- 1}
    A \tau) \right]\\
    & = & - \frac{1}{2 \tau + 3} \psi_2 (\tau) - \frac{(\tau + 2) (3 \tau^2 +
    8 \tau + 6)}{6 (2 \tau + 3)^2} \pi^2 .
  \end{eqnarray*}
  For the second equality we applied the second transformation of
  (\ref{eq:fun2AB}) with $B^{- 1} A \tau$ in place of $\tau$, while for the
  third equality we use the fact that $B^{- 1} A \tau = - \frac{\tau + 2}{2
  \tau + 3}$.
  
  For $\tau = \sqrt{2}$ this reduces to
  \[ \psi_2 ( \sqrt{2}) = (2 \sqrt{2} - 3) \psi_2 ( \sqrt{2}) + \frac{2}{3} (
     \sqrt{2} - 2) \pi^2, \]
  which has the solution $\psi_2 ( \sqrt{2}) = - \frac{\pi^2}{3}$, in
  agreement with the value given in {\cite{lrr-secantzeta}}. Families of more
  general explicit evaluations of $\psi_{2 m}$ at real quadratic
  irrationalities are derived in Example \ref{eg:phisqrtvals}.
\end{example}

In fact, it is the case that $\psi_{2 m} ( \sqrt{r})$ is a rational multiple
of $\pi^{2 m}$ whenever $r$ is a rational number. This may be shown in
essentially the same fashion, as we indicate next.

\begin{theorem}
  \label{thm:phisqrtr}Let $m$ be a positive integer and $r$ a positive
  rational number such that $\sqrt{r}$ is not a rational number with an even
  denominator. Then
  \[ \psi_{2 m} ( \sqrt{r}) \in \pi^{2 m} \mathbbm{Q}. \]
\end{theorem}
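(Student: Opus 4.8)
The plan is to mimic the proof of Theorem~\ref{thm:phisqrtk} essentially verbatim, with the only new ingredient being a replacement for Pell's equation that handles rational $r$ rather than integral $k$. The heart of the argument is that we need a matrix $C \in \Gamma(2)$, $C \neq \pm I$, fixing $\sqrt{r}$ under linear fractional transformation, with lower-left entry $c \neq 0$. Once such a $C$ is in hand, the functional equations (\ref{eq:funAB}) apply exactly as before: repeated application expresses $\psi_{2m}(\sqrt{r})$ as $(c\sqrt{r}+d)^{-(2m-1)}\psi_{2m}(\sqrt{r})$ plus $\pi^{2m}$ times a rational function of $\sqrt{r}$ with rational coefficients, so that $\psi_{2m}(\sqrt{r}) = \pi^{2m} g_r(\sqrt{r})$ for some $g_r \in \mathbb{Q}(\tau)$. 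The evenness of $\psi_{2m}$ then forces $g_r(\sqrt{r}) = g_r(-\sqrt{r})$, pinning $g_r(\sqrt{r})$ down to a rational number.

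First I would reduce to writing $r = p/q$ in lowest terms, so that $\sqrt{r} = \sqrt{pq}/q$. The generalized Pell equation to solve is now $X^2 - r Y^2 = 1$ in rationals, or more usefully one seeks a matrix with integer (or at least suitable) entries fixing $\sqrt{r}$. Concretely, any matrix $\begin{pmatrix} X & rY \\ Y & X \end{pmatrix}$ with $X^2 - rY^2 = 1$ fixes $\sqrt{r}$; to land inside $\mathrm{SL}_2(\mathbb{Z})$ I would instead look at fixing $\sqrt{pq}$ via the ordinary Pell equation $X^2 - pq\,Y^2 = 1$, which always has nontrivial integer solutions since $pq$ is a positive nonsquare. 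The matrix $D = \begin{pmatrix} X & pqY \\ Y & X \end{pmatrix}$ then fixes $\sqrt{pq}$, and conjugating by $\mathrm{diag}(q,1)$ (i.e.\ the scaling $\tau \mapsto \tau/q$, or its inverse) produces a matrix fixing $\sqrt{r} = \sqrt{pq}/q$. The work is to check that, after squaring to enter $\Gamma(2)$ as in the integral case, this conjugated matrix still has integer entries and lies in $\Gamma(2)$; this is where the hypothesis that $\sqrt{r}$ is not a rational with even denominator must enter, to guarantee $C \in \langle A,B,-I\rangle = \Gamma(2)$ with $c \neq 0$.

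The main obstacle is precisely tracking the denominators through the conjugation so that the fixing matrix $C$ genuinely lands in $\Gamma(2) \subset \mathrm{SL}_2(\mathbb{Z})$ rather than merely in $\mathrm{SL}_2(\mathbb{Q})$; the parity/denominator condition in the hypothesis is exactly the arithmetic constraint that makes this work, and one should verify that the excluded cases (even denominator) are genuinely where $\sqrt{r}$ fails to admit such an integral fixing matrix. I would also need to handle the degenerate possibility that $r$ is itself a perfect square of a rational, in which case $\sqrt{r}$ is rational and the argument via nonsquare Pell equations does not apply; such rational arguments with odd denominator are covered by the absolute convergence and direct functional-equation manipulation noted in the introduction, or by a separate elementary evaluation. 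Apart from these bookkeeping points, no new analytic input is required beyond the limiting argument, already flagged in the introduction, that lets the functional equations (\ref{eq:funAB}) be applied at the real quadratic argument $\sqrt{r}$.
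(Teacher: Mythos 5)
Your proposal follows, in substance, exactly the paper's route: the paper writes $r = a/b$, takes a nontrivial solution of the ordinary Pell equation $X^2 - abY^2 = 1$, and uses the matrix
$D = \begin{pmatrix} X & aY\\ bY & X \end{pmatrix} \in \tmop{SL}_2(\mathbbm{Z})$,
which fixes $\sqrt{r}$ and whose square lies in $\Gamma(2)$, after which the argument of Theorem~\ref{thm:phisqrtk} repeats verbatim. Your conjugation construction produces precisely this matrix, so the core of your plan is correct.

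Two corrections, though. First, the ``main obstacle'' you flag is not an obstacle at all: conjugating $\begin{pmatrix} X & pqY\\ Y & X \end{pmatrix}$ by $\operatorname{diag}(q,1)$ yields $\begin{pmatrix} X & pY\\ qY & X \end{pmatrix}$, which is automatically integral with determinant $X^2 - pqY^2 = 1$, and
$D^2 = \begin{pmatrix} 2X^2-1 & 2pXY\\ 2qXY & 2X^2-1 \end{pmatrix} \equiv I \pmod{2}$
unconditionally. Consequently, your guess about where the even-denominator hypothesis enters is wrong: it plays no role whatsoever in landing $C$ in $\Gamma(2)$. That hypothesis is there purely so that $\psi_{2m}(\sqrt{r})$ is defined: if $\sqrt{r}$ is rational with even denominator, infinitely many terms $\sec(\pi n \sqrt{r})$ hit poles of the secant, and the series does not exist; at irrational quadratic $\sqrt{r}$ one uses the limiting argument flagged in the introduction, as you note. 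Second, your caveat about $r$ being the square of a rational is well taken --- then $pq$ is a perfect square, Pell has only trivial solutions, and the argument gives nothing; the paper's own proof is silent on this case as well. But your suggested fix via ``a separate elementary evaluation'' needs more care than you indicate, since the coefficients $\sec(\pi n \sqrt{r})$ need not be rational (e.g.\ $\sqrt{r} = 1/5$). A cleaner repair stays within the same machinery: $A$ and $B$ preserve the set of rationals with odd denominator, every such rational is $\Gamma(2)$-equivalent to the cusp $0$ or $1$, and $\psi_{2m}(0) = \zeta(2m)$, $\psi_{2m}(1) = -(1 - 2^{1-2m})\zeta(2m)$ are rational multiples of $\pi^{2m}$, so finitely many applications of (\ref{eq:funAB}) transport the evaluation.
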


\begin{proof}
  Write $r = \frac{a}{b}$, where $a$ and $b$ are positive integers. Similarly
  to the proof of Theorem \ref{thm:phisqrtk}, we observe that if the integers
  $X$ and $Y$ solve Pell's equation $X^2 - a b Y^2 = 1$, then $D \cdot
  \sqrt{r} = \sqrt{r}$, where $D$ is the matrix
  \[ D = \left(\begin{array}{cc}
       X & a Y\\
       b Y & X
     \end{array}\right) \in \tmop{SL}_2 \left( \mathbbm{Z} \right) . \]
  Again, one observes that $D^2 \in \Gamma (2)$. Hence we can find $C \in
  \left\langle A, B, - I \right\rangle = \Gamma (2)$, $C \neq \pm I$, such
  that $C \cdot \sqrt{r} = \sqrt{r}$.
  
  It remains to proceed exactly as in the proof of Theorem \ref{thm:phisqrtk}.
\end{proof}

\begin{remark}
  \label{rk:phisqrtr}We remark that (parts of) the proofs of Theorems
  \ref{thm:phisqrtk} and \ref{thm:phisqrtr} also apply to other functions that
  satisfy a modular transformation which is of a similar form as the one for
  the secant Dirichlet series $\psi_{2 m}$. This includes the class of Eichler
  integrals to which, as discussed in the Section \ref{sec:eichler}, $\psi_{2
  m}$ belongs.
\end{remark}

\section{Eisenstein series}\label{sec:eisenstein}

We begin this section by observing a natural relation between the secant
Dirichlet series $\psi_{2 m}$ and Eisenstein series. Specifically, we note
that the $2 m$th derivative of $\psi_{2 m}$ is, essentially, an Eisenstein
series of weight $2 m + 1$ and level $4$. In the language of Section
\ref{sec:eichler}, this means that the secant Dirichlet series are Eichler
integrals of certain Eisenstein series. In order to investigate Eichler
integrals of Eisenstein series in general, we recall basic facts about
Eisenstein series in this section.

Throughout, we write $D = \frac{\mathd}{\mathd \tau}$ and $q = e^{2 \pi i
\tau}$. For $n \geqslant 0$, we denote with $E_n$ the $n$th Euler number,
defined by
\begin{equation}
  \tmop{sech} x = \sum_{n = 0}^{\infty} E_n  \frac{x^n}{n!}, \label{eq:defen}
\end{equation}
where $|x| < \pi$.

\begin{lemma}
  \label{lem:phiD}We have
  \[ D^{2 m} [\psi_{2 m} (\tau / 2)] = \frac{(2 m) !}{\pi} \sumprime_{k, j \in
     \mathbbm{Z}} \frac{\chi_{- 4} (j)}{(k \tau + j)^{2 m + 1}} - \frac{(-
     1)^m E_{2 m} \pi^{2 m}}{2^{2 m + 1}}, \]
  where $\chi_{- 4} = ( \tfrac{- 4}{\cdot})$ is the nonprincipal Dirichlet
  character modulo $4$ (that is, $\chi_{- 4} (n) = 0$ for even $n$, and
  $\chi_{- 4} (n) = (- 1)^{(n - 1) / 2}$ for odd $n$).
\end{lemma}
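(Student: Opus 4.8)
The plan is to compute the $2m$th derivative of $\psi_{2m}(\tau/2)$ term by term and recognize the result as an Eisenstein series. I would start from the definition
\[
\psi_{2m}(\tau/2) = \sum_{n=1}^\infty \frac{\sec(\pi n \tau/2)}{n^{2m}},
\]
and rewrite the secant in a form amenable to repeated differentiation. The natural device is the partial-fraction (Mittag-Leffler) expansion of $\sec$, namely
\[
\sec(\pi w) = \frac{1}{\cos(\pi w)} = \sum_{k \in \mathbbm{Z}} \frac{(-1)^k}{\pi (w - k - 1/2)} \cdot (\text{appropriate sign bookkeeping}),
\]
or, more cleanly, the expansion of $\sec$ as a sum over odd integers with alternating signs. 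Writing $w = n\tau/2$ and collecting the $\pm$ terms, one gets $\sec(\pi n \tau/2)$ as a sum of $\pi/(k\tau + j)$-type terms where $j$ runs over odd residues and the sign is $\chi_{-4}(j)$. This is the step where the character $\chi_{-4}$ should emerge: the factor $(-1)^{(j-1)/2}$ in the partial-fraction expansion of the secant is exactly $\chi_{-4}(j)$ on odd $j$.

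Next I would apply $D^{2m}$. Differentiating a term $\pi/(k\tau + j)$ repeatedly is routine: $D^{2m}[(k\tau+j)^{-1}] = (2m)!\, k^{2m} (k\tau+j)^{-(2m+1)}$, which produces the weight $2m+1$ denominator and the combinatorial prefactor $(2m)!$. Since $\sec$ does not depend on $\tau$ through a $k=0$ term in the same way — the $k=0$ contributions correspond to the constant term of the series — the derivative kills the $k=0$ part and leaves precisely the doubly-indexed sum $\sumprime_{k,j} \chi_{-4}(j)(k\tau+j)^{-(2m+1)}$, up to the overall $\tfrac{1}{\pi}$ and the $(2m)!$. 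The main subtlety is justifying the interchange of summation (over $n$, or equivalently over $j$) with the differentiation and with the partial-fraction expansion; here absolute convergence for nonreal $\tau$ (noted in the introduction) together with locally uniform convergence of the differentiated series should suffice, so I would invoke that rather than belabor it.

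The remaining, and I expect trickiest, piece is the constant term $-\tfrac{(-1)^m E_{2m} \pi^{2m}}{2^{2m+1}}$. This arises from the $k=0$ part of the Eisenstein-type sum, i.e.\ from $\sum_{j}' \chi_{-4}(j) j^{-(2m+1)}$, which I would need to identify as a special value of the Dirichlet $L$-function $L(\chi_{-4}, 2m+1) = \beta(2m+1)$. The clean closed form for these odd values of the $\beta$ function is exactly $\beta(2m+1) = \tfrac{(-1)^m E_{2m} \pi^{2m+1}}{4^{m+1}(2m)!} \cdot (\text{constant})$, and matching the normalization with the $(2m)!/\pi$ prefactor is what produces the stated Euler-number constant. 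So the obstacle is really a bookkeeping one: I must carefully track the factors of $2$ coming from the $\tau/2$ argument, the factor $\tfrac{1}{\pi}$, and the $(2m)!$, and confirm they conspire to give $2^{2m+1}$ in the denominator. I would verify this constant independently by computing the $q$-expansion: since the full Eisenstein sum has a $q$-expansion with a constant term governed by $L(\chi_{-4}, 2m+1)$, comparing the constant term of $D^{2m}[\psi_{2m}(\tau/2)]$ (which, up to the Eisenstein piece, must be the negative of that $L$-value to make the limit as $\op{Im}\tau \to \infty$ vanish appropriately) pins down the sign $(-1)^m$ and the Euler-number expression via the standard evaluation of $\beta$ at odd integers.
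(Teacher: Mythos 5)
Your proposal follows essentially the same route as the paper's proof: the Mittag--Leffler (partial fraction) expansion of the secant with $\chi_{-4}$ emerging as the sign pattern on odd integers, term-by-term differentiation producing the weight-$(2m+1)$ lattice sum, and Euler's evaluation $L(\chi_{-4}, 2m+1) = \tfrac{1}{2}\tfrac{(-1)^m E_{2m}}{(2m)!}\left(\tfrac{\pi}{2}\right)^{2m+1}$ pinning down the constant, which your $4^{m+1}$ formula states correctly. One small point of orientation: the differentiated series contains only the terms $k \geqslant 1$, so passing to the primed sum means folding $k \leqslant -1$ into $k \geqslant 1$ via the symmetry $(k,j) \mapsto (-k,-j)$ (both $\chi_{-4}$ and the odd power change sign) and then adding and subtracting the $k=0$ row $2L(\chi_{-4},2m+1)$ --- the derivative does not ``kill'' a $k=0$ part, but your third paragraph in effect recognizes this correctly.
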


\begin{proof}
  In light of the partial fraction expansion of the secant function
  \[ \sec \left(  \frac{\pi \tau}{2} \right) = \frac{4}{\pi} \sum_{j \geqslant
     1} \frac{\chi_{- 4} (j) j}{j^2 - \tau^2} = \lim_{N \rightarrow \infty}
     \frac{2}{\pi} \sum_{j = - N}^N \frac{\chi_{- 4} (j)}{\tau + j}, \]
  we derive that
  \[ D^k \sec \left( \frac{\pi \tau}{2} \right) = \frac{2 (- 1)^k k!}{\pi}
     \sum_{j \in \mathbbm{Z}} \frac{\chi_{- 4} (j)}{(\tau + j)^{k + 1}} . \]
  Consequently,
  \begin{eqnarray}
    D^{2 m} \sum_{n \geqslant 1} \frac{\sec \left( \frac{\pi n \tau}{2}
    \right)}{n^{2 m}} & = & \frac{2 (2 m) !}{\pi} \sum_{k \geqslant 1} \sum_{j
    \in \mathbbm{Z}} \frac{\chi_{- 4} (j)}{(k \tau + j)^{2 m + 1}} \nonumber\\
    & = & \frac{(2 m) !}{\pi} \sumprime_{k, j \in \mathbbm{Z}} \frac{\chi_{- 4}
    (j)}{(k \tau + j)^{2 m + 1}} - \frac{2 (2 m) !}{\pi} L (\chi_{- 4}, 2 m +
    1),  \label{eq:phiDL}
  \end{eqnarray}
  where
  \[ L (\chi_{- 4}, s) = \sum_{n = 1}^{\infty} \frac{\chi_{- 4} (n)}{n^s} =
     \sum_{n = 0}^{\infty} \frac{(- 1)^n}{(2 n + 1)^s} \]
  is the Dirichlet $L$-series attached to $\chi_{- 4}$, also known as the
  Dirichlet beta function. Using Euler's well-known evaluation {\cite{ayoub1}}
  \begin{equation}
    L (\chi_{- 4}, 2 m + 1) = \frac{1}{2}  \frac{\left( - 1 \right)^m E_{2
    m}}{(2 m) !}  \left( \frac{\pi}{2} \right)^{2 m + 1} \label{eq:L4odd}
  \end{equation}
  in (\ref{eq:phiDL}), we complete the proof of Lemma \ref{lem:phiD}.
\end{proof}

\begin{example}
  In the case $m = 1$, we find that
  \[ D^2 \psi_2 (\tau / 2) = \frac{2}{\pi} \sumprime_{k, j \in \mathbbm{Z}}
     \frac{\chi_{- 4} (j)}{(k \tau + j)^3} - \frac{\pi^2}{8}, \]
  since $L (\chi_{- 4}, 3) = \frac{\pi^3}{32}$. The Eisenstein series
  \[ \sumprime_{k, j \in \mathbbm{Z}} \frac{\chi_{- 4} (j)}{(4 k \tau + j)^3} =
     \frac{\pi^3}{4} \left[ \frac{1}{4} - q - q^2 + 8 q^3 - q^4 - 26 q^5 +
     \cdots \right] \]
  is a modular form of weight $3$, level $4$ and character $\chi_{- 4}$.
\end{example}

Define, as in {\cite[Chapter 7]{miyake}}, the Eisenstein series
\begin{equation}
  E_k (\tau ; \chi, \psi) = \sumprime_{m, n \in \mathbbm{Z}} \frac{\chi (m) \psi
  (n)}{(m \tau + n)^k}, \label{eq:eisenstein}
\end{equation}
where $k > 2$, and $\chi$ and $\psi$ are Dirichlet characters modulo $L$ and
$M$, respectively. As detailed in {\cite[Chapter 7]{miyake}}, these Eisenstein
series can be used to generate all Eisenstein series with respect to any
congruence subgroup.

\begin{example}
  \label{eg:phiDE}By Lemma \ref{lem:phiD}, the secant Dirichlet series $\psi$
  is connected with the case $\psi = \chi_{- 4}$ and $\chi = 1$, the principal
  character modulo $1$. To be precise,
  \begin{equation}
    D^{2 m} [\psi_{2 m} (\tau / 2)] = \frac{(2 m) !}{\pi} \left[ E_{2 m + 1}
    (\tau ; 1, \chi_{- 4}) - E_{2 m + 1} (i \infty ; 1, \chi_{- 4}) \right] .
    \label{eq:phiDE}
  \end{equation}
  That the constant term on the right-hand side indeed agrees with the one
  stated in Lemma \ref{lem:phiD} will become clear from the facts about the
  Eisenstein series $E_k (\tau ; \chi, \psi)$ which we review next.
\end{example}

\begin{example}
  As detailed in Section \ref{sec:rama1}, the cotangent Dirichlet series
  $\xi_s (\tau)$, introduced in (\ref{eq:defcz}), is in a similar way related
  to the Eisenstein series $E_{2 m} (\tau ; 1, 1)$, which is modular with
  respect to the full modular group.
\end{example}

In the sequel, we always assume that $\chi (- 1) \psi (- 1) = (- 1)^k$ since,
otherwise, $E_k (\tau ; \chi, \psi) = 0$. In order to derive the Fourier
expansion of the Eisenstein series, we recall that, by the character analogue
of the Lipschitz summation formula {\cite{berndt-char}}, for any primitive
Dirichlet character $\psi$ of modulus $M$,
\[ \sum_{n = - \infty}^{\infty} \frac{\psi (n)}{(\tau + n)^s} = G (\psi)
   \frac{(- 2 \pi i / M)^s}{\Gamma (s)} \sum_{m = 1}^{\infty} \bar{\psi} (m)
   m^{s - 1} e^{2 \pi i m \tau / M} . \]
Here, and in the sequel, $G (\psi) = \sum_{a = 1}^M \psi (a) e^{2 \pi i a /
M}$ denotes the Gauss sum associated with $\psi$. If $\psi$ is primitive, we
thus find that
\begin{eqnarray}
  E_k (\tau ; \chi, \psi) & = & a_0 (E_k) + 2 \sum_{m = 1}^{\infty} \chi (m)
  \sum_{n \in \mathbbm{Z}} \frac{\psi (n)}{(m \tau + n)^k} \nonumber\\
  & = & a_0 (E_k) + 2 G (\psi) \frac{(- 2 \pi i / M)^k}{\Gamma (k)} \sum_{m =
  1}^{\infty} \chi (m) \sum_{n = 1}^{\infty} \bar{\psi} (n) n^{k - 1} e^{2 \pi
  i n m \tau / M} \nonumber\\
  & = & a_0 (E_k) + A \sum_{n = 1}^{\infty} a_n (E_k) e^{2 \pi i n \tau / M},
  \label{eq:Eqexp}
\end{eqnarray}
where $A = 2 G (\psi) (- 2 \pi i / M)^k / \Gamma (k)$ and
\[ a_0 (E_k) = \left\{ \begin{array}{ll}
     2 L (k, \psi), & \text{if $\chi = 1$,}\\
     0, & \text{otherwise,}
   \end{array} \right. \hspace{2em} a_n (E_k) = \sum_{d|n} \chi (n / d)
   \bar{\psi} (d) d^{k - 1} . \]
We note that the series in (\ref{eq:Eqexp}) converges for any complex value of
$k$, and hence (\ref{eq:Eqexp}) provides the analytic continuation of
(\ref{eq:eisenstein}) to the entire complex $k$-plane.

\begin{example}
  In light of Example \ref{eg:phiDE}, we therefore find that the $q$-expansion
  of the secant Dirichlet series is given by
  \begin{equation}
    \psi_{2 m} (2 \tau) = 2 \sum_{n \geqslant 1} \left( \sum_{d|n} \chi_{- 4}
    (d) d^{2 m} \right) \frac{q^n}{n^{2 m}} . \label{eq:phiqexp}
  \end{equation}
\end{example}

Recall that the $L$-function of a modular form $f (\tau) = \sum_{n =
0}^{\infty} b (n) e^{2 \pi i n \tau / \lambda}$ is defined as
\begin{equation}
  L (f, s) = \frac{(2 \pi)^s}{\Gamma (s)} \int_0^{\infty} \left[ f \left( i
  \tau \right) - f (i \infty) \right] \tau^{s - 1} \mathd \tau = \lambda^s
  \sum_{n = 1}^{\infty} \frac{b (n)}{n^s} . \label{eq:Lf}
\end{equation}
As another consequence of (\ref{eq:Eqexp}), if $\psi$ is primitive, the
$L$-function of $E (\tau) = E_k (\tau ; \chi, \psi)$ is given by
\begin{equation}
  L (E, s) = A M^s L (\chi, s) L ( \bar{\psi}, 1 - k + s) . \label{eq:LE}
\end{equation}
Depending on the parity of $s$, the values of $L (E, s)$ can be evaluated in
terms of generalized Bernoulli numbers $B_{n, \chi}$, which are defined by
\begin{equation}
  \sum_{n = 0}^{\infty} B_{n, \chi}  \frac{x^n}{n!} = \sum_{a = 1}^L
  \frac{\chi (a) x e^{a x}}{e^{L x} - 1}, \label{eq:Bchi}
\end{equation}
if $\chi$ is a Dirichlet character modulo $L$. We observe that, for $n \neq
1$, the classical Bernoulli numbers $B_n$ are equal to $B_{n, \chi}$ with
$\chi = 1$. Similarly, the Euler numbers are connected with the case $\chi =
\chi_{- 4}$:
\begin{equation}
  \frac{1}{2}  \frac{E_{2 m}}{\left( 2 m \right) !} = - \frac{B_{2 m + 1,
  \chi_{- 4}}}{\left( 2 m + 1 \right) !} . \label{eq:EB}
\end{equation}
Generalized Bernoulli numbers are intimatly related to values of Dirichlet
$L$-series. Let $n > 0$ be an integer and $\chi$ a primitive Dirichlet
character of conductor $L$ such that $\chi (- 1) = (- 1)^n$. Then, as
detailed, for instance, in {\cite[Thm. 3.3.4]{miyake}},
\begin{eqnarray}
  L (n, \chi) & = & (- 1)^{n - 1} \frac{G (\chi)}{2} \left( \frac{2 \pi i}{L}
  \right)^n \frac{B_{n, \bar{\chi}}}{n!},  \label{eq:LB}\\
  L (1 - n, \chi) & = & - B_{n, \chi} / n. \nonumber
\end{eqnarray}
On the other hand, if $\chi (- 1) \neq (- 1)^n$, then $L (1 - n, \chi) = 0$
unless $\chi = 1$ and $n = 1$.

Finally, let us recall the basic transformation properties of the Eisenstein
series $E_k (\tau ; \chi, \psi)$, which are detailed, for instance, in
{\cite[Chapter 7]{miyake}}. Denote with $\Gamma_0 (L, M)$ the group of
matrices $\gamma = \left(\begin{array}{cc}
  a & b\\
  c & d
\end{array}\right) \in \tmop{SL}_2 (\mathbbm{Z})$ such that $M|b$ and $L|c$.
For any such $\gamma$,
\begin{equation}
  E_k (\tau ; \chi, \psi) |_k \gamma = \chi (d) \bar{\psi} (d) E_k (\tau ;
  \chi, \psi) . \label{eq:Eg}
\end{equation}
Moreover,
\begin{equation}
  E_k (\tau ; \chi, \psi) |_k S = \chi (- 1) E_k (\tau ; \psi, \chi) .
  \label{eq:ES}
\end{equation}
\section{Review of Eichler integrals}\label{sec:eichler}

In the language of Eichler integrals, to be reviewed next, the observation of
Section \ref{sec:eisenstein} becomes the simple statement that the secant
Dirichlet series $\psi_{2 m}$ are Eichler integrals of weight $2 m + 1$
Eisenstein series of level $2$. As such, it is natural that the $\psi_{2 m}$
satisfy modular functional equations as in Section \ref{sec:funres}. In fact,
it becomes \tmtextit{a priori} clear that the $\psi_{2 m}$ satisfy modular
relations such as (\ref{eq:phifunsym}), in which the coefficients of the
rational function on the right-hand side are determined by the period
polynomial of the Eichler integral.

Lemma \ref{lem:phiD} shows that
\[ F_{2 m} (\tau) \assign \psi_{2 m} (\tau) + \frac{E_{2 m}}{2 (2 m) !}  (\pi
   i \tau)^{2 m} \]
has the property that its $2 m$th derivative is a modular form with respect to
$\Gamma = \langle A, B \rangle$ of weight $2 m + 1$ (with $A$ and $B$ as
defined in (\ref{eq:AB})). In other words, $F_{2 m}$ is an {\tmem{Eichler
integral}} (we adopt the common custom and also refer to $\psi_{2 m}$ as an
Eichler integral). In particular, for all $\gamma = \left(\begin{array}{cc}
  a & b\\
  c & d
\end{array}\right) \in \Gamma$,
\begin{equation}
  (c \tau + d)^{2 m - 1} F_{2 m} (\gamma \tau) - F_{2 m} (\tau)
  \label{eq:periodpoly}
\end{equation}
is a polynomial of degree $2 m - 1$, the {\tmem{period polynomial}} of $F_{2
m}$. Rewriting (\ref{eq:periodpoly}) in terms of $\psi_{2 m} (\tau)$, we find
that $\psi_{2 m} (\tau)$ satisfies a functional equation of the form
(\ref{eq:phifunsym}), where the rational function on the right-hand side is
expressed in terms of the period polynomial of $F_{2 m}$. For the general
theory of period polynomials we refer to {\cite{popa-period}} and the
references therein. A very brief introduction, suitable for our purposes, is
given next.

\begin{remark}
  A direct way to see that (\ref{eq:periodpoly}) is indeed a polynomial of
  degree $2 m - 1$ is offered by Bol's identity {\cite{bol-de}}. It states
  that, for all sufficiently differentiable $F$ and $\gamma =
  \left(\begin{array}{cc}
    a & b\\
    c & d
  \end{array}\right) \in \tmop{SL}_2 (\mathbbm{R})$,
  \begin{equation}
    (D^{k + 1} F) (\gamma \tau) = (c \tau + d)^{k + 2} D^{k + 1} \left[ (c
    \tau + d)^k F (\gamma \tau) \right] . \label{eq:bol}
  \end{equation}
  For the present purpose, we apply (\ref{eq:bol}) with $F = F_{2 m}$ and $k =
  2 m - 1$.
\end{remark}

In the following we adopt the notation of {\cite{popa-period}}. Let
$\mathcal{A}$ be the space of holomorphic functions on the upper half-plane
$\mathcal{H}$. As usual, $\Gamma_1 = \tmop{SL}_2 (\mathbbm{Z})$ acts on
$\mathcal{H}$ by linear fractional transformations, and on $\mathcal{A}$ via
the slash operators; namely, if $f \in \mathcal{A}$ and $k$ is an integer,
then
\[ (f|_k g) (\tau) = (c \tau + d)^{- k} f (g \tau), \hspace{1em} g =
   \left(\begin{array}{cc}
     a & b\\
     c & d
   \end{array}\right) \in \Gamma_1 . \]
This action extends naturally to the group algebra $\mathbbm{C} [\Gamma_1]$.

As usual, we denote with $T$, $S$ and $R$ the matrices
\begin{equation}
  T = \left(\begin{array}{cc}
    1 & 1\\
    0 & 1
  \end{array}\right), \hspace{1em} S = \left(\begin{array}{cc}
    0 & - 1\\
    1 & 0
  \end{array}\right), \hspace{1em} R = \left(\begin{array}{cc}
    1 & 0\\
    1 & 1
  \end{array}\right), \label{eq:TS}
\end{equation}
and recall that the matrices $T$ and $S$ generate $\Gamma_1$.

From now on, let $f$ be a (not necessarily cuspidal) modular form of integral
weight $k \geqslant 2$ with respect to $\Gamma$, with $\Gamma$ being a
subgroup of finite index of $\Gamma_1$. Note that the modularity of $f$
implies that $f|_k g = f$ for all $g \in \Gamma$. In the sequel, we will
abbreviate $f|g = f|_k g$ since this is the only action of $\Gamma_1$ on
modular forms of weight $k$ that we consider.

Throughout this section, $w = k - 2$. Let $V_w$ be the space of complex
polynomials of degree at most $w$. The (multiple) period polynomial,
introduced in {\cite{popa-period}}, attached to $f$ is the map $\rho_f :
\Gamma \backslash \Gamma_1 \rightarrow V_w$ defined by
\begin{equation}
  \rho_f (A) (X) = \int_0^{i \infty} \left[ f|A (t) - a_0 (f|A) \right] (t -
  X)^w \mathd t. \label{eq:rhof}
\end{equation}
In the sequel, we will often omit the dependence on $X$ and just write $\rho_f
(A)$ for the left-hand side. The goal of the final set of definitions is to
connect these period polynomials, whose coefficients encode the critical
$L$-values of $f$, to the transformation properties of Eichler integrals of
$f$. The (multiple) Eichler integral of $f$, introduced in
{\cite{popa-period}}, is the function $\tilde{f} : \Gamma \backslash \Gamma_1
\rightarrow \mathcal{A}$ defined by
\begin{equation}
  \tilde{f} (A) (\tau) = \int_{\tau}^{i \infty} \left[ f|A (z) - a_0 (f|A)
  \right] (z - \tau)^w \mathd z, \label{eq:ftilda}
\end{equation}
with $a_0 (f) = f (i \infty)$ denoting the constant term of the Fourier
expansion of $f$. If $g \in \Gamma_1$ then $\tilde{f} |g (A) = \tilde{f} (A
g^{- 1}) |_{- w} g$ defines an action of $\Gamma_1$, and hence $\mathbbm{C}
[\Gamma_1]$, on functions $\tilde{f} : \Gamma \backslash \Gamma_1 \rightarrow
\mathcal{A}$.

The following result is {\cite[Proposition 8.1]{popa-period}}, which may also
be found in {\cite{weil-hecke}}, where it is expressed in slightly different
terms.

\begin{proposition}
  \label{prop:rhohatrho}With $f$ as above, define $\hat{\rho}_f = \tilde{f} |
  (1 - S)$. Then, for any $A \in \Gamma \backslash \Gamma_1$,
  \[ \hat{\rho}_f (A) = \rho_f (A) + (- 1)^w \frac{a_0 (f|A)}{w + 1} X^{w + 1}
     + \frac{a_0 (f|A S^{- 1})}{w + 1} X^{- 1} . \]
\end{proposition}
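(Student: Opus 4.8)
The plan is to compute $\hat{\rho}_f(A) = \tilde{f}|(1-S)(A)$ directly from the definitions and to identify the resulting expression with $\rho_f(A)$ plus explicit correction terms. First I would unwind the action of $1-S$ on the Eichler integral. By the definition of the slash action on period/Eichler objects, $\tilde{f}|(1-S)(A) = \tilde{f}(A) - \tilde{f}|S(A) = \tilde{f}(A) - \tilde{f}(AS^{-1})|_{-w}S$. So the core computation is to understand the two pieces $\tilde{f}(A)(\tau)$ and $(\tilde{f}(AS^{-1})|_{-w}S)(\tau)$, each given by the integral representation \eqref{eq:ftilda}.

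The key technical step is to rewrite $\tilde{f}(A)(\tau) = \int_\tau^{i\infty}[f|A(z)-a_0(f|A)](z-\tau)^w\,\md z$ by splitting the interval at the basepoint used in the period polynomial, namely by comparing $\int_\tau^{i\infty}$ with $\int_0^{i\infty}$. Writing $\int_\tau^{i\infty} = \int_0^{i\infty} - \int_0^\tau$ would let me recognize a $\rho_f(A)$-type integral, but this is not quite how the pieces combine; instead the cleaner route is to apply the change of variables $z\mapsto Sz = -1/z$ inside the second term and use the modularity relation $f|(AS^{-1})|S = f|A$ together with the factor $(cz+d)^{\text{power}}$ produced by the slash operator. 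After substitution, the integral $(\tilde{f}(AS^{-1})|_{-w}S)(\tau)$ should convert into an integral over the path $S\tau$ to $i\infty$, which combines with the first term so that the variable-interior parts cancel and only the contribution over the full geodesic $0$ to $i\infty$ survives, producing $\rho_f(A)(X)$ with $X=\tau$. I would track the constant-term subtractions $a_0(f|A)$ and $a_0(f|AS^{-1})$ carefully, since these do not cancel and are precisely what generate the two monomial correction terms. Specifically, the leftover integral of the $a_0(f|A)$ term against $(t-X)^w$ over a truncated ray gives, after integrating the monomial, the term $(-1)^w\frac{a_0(f|A)}{w+1}X^{w+1}$, while the analogous leftover from the $S$-transformed piece, carrying the constant $a_0(f|AS^{-1})$ and the factor from $(cz+d)^{-w}$ under $z\mapsto -1/z$, produces $\frac{a_0(f|AS^{-1})}{w+1}X^{-1}$.

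The main obstacle I anticipate is the bookkeeping of the substitution $z\mapsto -1/z$: one must verify that the factor $(c z+d)^{-w}$ implicit in $|_{-w}S$, the Jacobian $\md z/z^2$, and the transformation of $(z-\tau)^w$ all combine correctly, and in particular that the boundary term at the cusp $i\infty$ (where $f$ is replaced by its constant term $a_0$) yields exactly the stated monomials rather than spurious extra contributions. The convergence of the relevant integrals near $i\infty$ is guaranteed because we subtract the constant terms, so the integrands decay exponentially; near the other endpoint the $(z-\tau)^w$ factor is polynomial and causes no trouble. Once the substitution is handled and the two constant-term integrals are evaluated explicitly, matching signs and powers of $X$ should complete the identification. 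Since the statement is quoted from \cite{popa-period}, I expect the author's proof to either reproduce this substitution argument or simply cite the reference, so I would aim to make the change-of-variables step airtight and leave the elementary monomial integrations as routine.
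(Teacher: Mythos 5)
The paper offers no proof of this proposition at all: it is quoted verbatim from \cite[Proposition 8.1]{popa-period} (see also \cite{weil-hecke}), so there is no internal argument to compare against, and your sketch is, in substance, the standard proof behind the cited result. Your skeleton is correct. By the definition of the action, $\hat{\rho}_f(A)=\tilde{f}(A)-\tilde{f}(AS^{-1})|_{-w}S$, and in the second term the substitution $z\mapsto -1/z$ works exactly as you predict: since $f|(AS^{-1})|S=f|A$ and $k=w+2$, the automorphy factor $\tau^{w}$, the Jacobian $\md z/z^{2}$, the factor $z^{k}$ from modularity, and the transformation $(z+1/\tau)^{w}=\bigl((z'-\tau)/(z'\tau)\bigr)^{w}$ combine to unity, turning that term into $\int_\tau^0\bigl[f|A(z)-a_0(f|AS^{-1})\,z^{-w-2}\bigr](z-\tau)^w\,\md z$. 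Concatenating with $\tilde{f}(A)$ then yields $\rho_f(A)$ plus the leftover $a_0(f|A)\int_0^\tau(z-\tau)^w\,\md z=(-1)^w\frac{a_0(f|A)}{w+1}\tau^{w+1}$ and the leftover $-a_0(f|AS^{-1})\int_0^\tau z^{-w-2}(z-\tau)^w\,\md z$, whose finite part is $\frac{a_0(f|AS^{-1})}{w+1}\tau^{-1}$ (antiderivative $\frac{(z-\tau)^{w+1}}{(w+1)\tau z^{w+1}}$), matching the statement; so your sign and exponent bookkeeping is right.

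The one genuine soft spot is your convergence claim that near the endpoint $0$ the factor $(z-\tau)^w$ ``is polynomial and causes no trouble.'' In the Eisenstein case --- the only case in which the proposition says more than $\hat{\rho}_f=\rho_f$, since for cusp forms all the $a_0$ terms vanish --- this is false as stated: as $z\to 0$ one has $f|A(z)\sim a_0(f|AS^{-1})\,z^{-k}$, so the integral (\ref{eq:rhof}) defining $\rho_f(A)$ diverges at $0$ and must be read as a regularized (finite-part) integral, equivalently through the analytic continuation of $L(f|A,s)$ --- which is precisely how the paper uses $\rho_E$ in the proof of Theorem \ref{thm:ppE}. Concretely, after your substitution the natural constant-term subtraction near $0$ is $a_0(f|AS^{-1})z^{-k}$, not a constant, and the decomposition of the combined expression into ``$\rho_f(A)$ plus two monomial leftovers'' is only legitimate once one checks that the divergent $\epsilon$-powers at the truncation cancel between the two pieces; for instance, the finite part of $\int_\epsilon^\tau z^{-w-2}(z-\tau)^w\,\md z$ is $-\frac{1}{(w+1)\tau}$, which is where your $X^{-1}$ term really comes from. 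With that regularization made explicit your argument closes; without it, the step ``only the contribution over the full geodesic $0$ to $i\infty$ survives, producing $\rho_f(A)$'' is not well defined.
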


Note that matters simplify when only cusp forms are considered; in that case,
$\rho_f$ and $\hat{\rho}_f$ coincide. Though not necessarily a polynomial in
the Eisenstein case, we will also refer to $\hat{\rho}_f$, as well as to
$\tilde{f} | (1 - \gamma)$ for $\gamma \in \Gamma_1$, as period polynomials of
$f$.

\begin{example}
  Let us make these definitions very explicit in the case where $f$ is the
  Eisenstein series $E (\tau) = E_k (\tau ; \chi, \psi)$ introduced in
  (\ref{eq:eisenstein}). Then $E|S = \chi (- 1) E_k (\tau ; \psi, \chi)$ by
  (\ref{eq:ES}), and thus
  \begin{equation}
    \hat{\rho}_E (I) = \tilde{E}_k (X ; \chi, \psi) - \psi (- 1) X^{k - 2}
    \tilde{E}_k (- 1 / X ; \psi, \chi), \label{eq:rhohatEtilda}
  \end{equation}
  assuming that $\chi (- 1) \psi (- 1) = (- 1)^k$ (since, otherwise, $E = 0$).
  Here, and in the sequel, we denote, with a slight abuse of notation,
  \[ \tilde{E}_k (\tau ; \chi, \psi) = \tilde{E} (I) (\tau), \]
  where the right-hand side is defined by (\ref{eq:ftilda}). It follows from
  the Fourier expansion (\ref{eq:Eqexp}) that, for primitive $\psi$,
  \begin{equation}
    \tilde{E}_k (\tau ; \chi, \psi) = - \frac{4 \pi i}{k - 1}  \frac{G
    (\psi)}{M} \sum_{n = 1}^{\infty} \left( \sum_{d|n} \bar{\psi} (n / d) \chi
    (d) d^{1 - k} \right) e^{2 \pi i n \tau / M} . \label{eq:Etildaq}
  \end{equation}
\end{example}

\begin{example}
  \label{eg:phiEtilda}Following Example \ref{eg:phiDE}, we observe from
  (\ref{eq:phiDE}) that
  \begin{equation}
    \psi_{2 m} (\tau / 2) = \frac{2 m}{\pi}  \tilde{E}_{2 m + 1} (\tau ; 1,
    \chi_{- 4}), \label{eq:phiEtilda}
  \end{equation}
  thus making explicit the nature of $\psi_{2 m}$ as an Eichler integral. In
  the present context, the functional equation (\ref{eq:phifun}), on replacing
  $\tau$ with $\tau / 2$ and the appropriate scaling, translates into
  \begin{equation}
    \tilde{E}_{2 m + 1} (\tau ; 1, \chi_{- 4}) |_{1 - 2 m} (R - 1) =
    \frac{\pi^{2 m + 1}}{2 m}  \left[ z^{2 m - 1} \right]  \frac{\sin (\tau z
    / 2)}{\sin (z) \sin ((\tau + 1) z)}, \label{eq:phifunE}
  \end{equation}
  where $R$ is as in (\ref{eq:TS}).
\end{example}

The next result allows us to express the left-hand side of $(
\ref{eq:phifunE})$ in terms of $\hat{\rho}_{E_{2 m + 1}}$ which, in the sense
of Proposition \ref{prop:rhohatrho}, is a period polynomial of $E_{2 m + 1}$.

\begin{proposition}
  \label{prop:ftildarhohat}Let $R$ be as in (\ref{eq:TS}). Let $f$ be a
  modular form for a group $\Gamma \leqslant \Gamma_1$, and let $n$ be such
  that $R^n \in \Gamma$. Then
  \[ \tilde{f} | (1 - R^n) (I) = \hat{\rho}_f | (1 - R^n) (I) . \]
\end{proposition}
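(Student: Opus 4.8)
The plan is to relate the left-hand side $\tilde f |(1-R^n)(I)$ to the period polynomial $\hat\rho_f = \tilde f|(1-S)$ by writing the element $1 - R^n$ of the group algebra $\mathbb{C}[\Gamma_1]$ as a suitable combination involving $1-S$ and elements of $\Gamma$. First I would recall from Proposition \ref{prop:rhohatrho} the defining relation $\hat\rho_f = \tilde f|(1-S)$, and note the key invariance coming from modularity: since $f|_k g = f$ for all $g \in \Gamma$, the Eichler integral $\tilde f$ is invariant under the right action of $\Gamma$, so that $\tilde f|\gamma = \tilde f$ for $\gamma \in \Gamma$, and in particular $\tilde f|R^n = \tilde f$ because $R^n \in \Gamma$ by hypothesis. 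Evaluating at the coset $I$, the difference $\tilde f|(1-R^n)(I) - \hat\rho_f|(1-R^n)(I)$ should collapse once this invariance is used.

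The main idea is the following telescoping identity in $\mathbb{C}[\Gamma_1]$. Because $\Gamma_1$ is generated by $S$ and $T$ (equivalently by $S$ and $R$, as $R = STS^{-1}$ up to sign conventions fixed in \eqref{eq:TS}), one can expand $1 - R^n$ as a sum $\sum_j R^{j}(1-R)$ of shifted copies of the basic cocycle increment $1-R$, and then further relate $1-R$ to $1-S$ through the conjugation that carries $S$ to $R$. Concretely, I would verify the cocycle-type relation
\[
\tilde f|(1-R^n) = \tilde f|(1-S) \cdot \Big(\sum_{j=0}^{n-1} \text{(elements of }\Gamma_1)\Big),
\]
so that applying $\hat\rho_f = \tilde f|(1-S)$ turns the left-hand side into $\hat\rho_f$ acting by the same group-algebra element. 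The point is that $\tilde f|(1-R^n)$ and $\hat\rho_f|(1-R^n)$ differ only by terms of the form $\tilde f|(1-S)$ applied to elements that, after using $\tilde f|\gamma = \tilde f$ for $\gamma\in\Gamma$, act trivially at the coset $I$.

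The cleanest way to package this is to observe that $\hat\rho_f$, as an element built from $\tilde f$, satisfies $\hat\rho_f|g = \tilde f|(1-S)g = \tilde f|g - \tilde f|Sg$, and to track how the two sides transform under right multiplication by powers of $R$. Since both $\tilde f|(1-R^n)$ and $\hat\rho_f|(1-R^n)$ are, by construction, functions on $\Gamma\backslash\Gamma_1$, and since $R^n\in\Gamma$ makes the coset $I\cdot R^n = I$, the discrepancy between them is governed precisely by the correction terms in Proposition \ref{prop:rhohatrho} (the $X^{w+1}$ and $X^{-1}$ contributions), which I would show cancel in the combination $1-R^n$ evaluated at $I$. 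I expect the main obstacle to be the bookkeeping of these coset-dependent correction terms: one must check carefully that the boundary contributions $a_0(f|A)$ and $a_0(f|AS^{-1})$ assemble into a genuine polynomial and do not obstruct the identification, which amounts to verifying that the non-polynomial pieces of $\hat\rho_f$ telescope away under the $(1-R^n)$ action at the identity coset.
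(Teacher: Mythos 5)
There is a genuine gap, and it sits at the heart of your argument: the claim that ``$\tilde f|\gamma = \tilde f$ for $\gamma \in \Gamma$, and in particular $\tilde f|R^n = \tilde f$'' is false. Modularity gives $f|_k\gamma = f$ in weight $k$, but the Eichler integral carries the weight $-w = 2-k$ action: unwinding the definition at the identity coset gives $\tilde f|\gamma(I) = \tilde f(\gamma^{-1})|_{-w}\gamma = \tilde f(I)|_{-w}\gamma$ (using only that $\tilde f$ is well defined on $\Gamma\backslash\Gamma_1$), and this differs from $\tilde f(I)$ exactly by the period polynomial --- the failure of this invariance is the entire subject of the proposition. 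Indeed, if your invariance claim held, the left-hand side $\tilde f|(1-R^n)(I)$ would vanish identically, contradicting (\ref{eq:phifunE}) and Corollary \ref{cor:EtildaB}, where this quantity is an explicit nonzero polynomial. The rest of the proposal does not repair this: the telescoping $1-R^n = \sum_{j} R^j(1-R)$ and the conjugation relating $R$ and $S$ (note, incidentally, that $STS^{-1} = R^{-1}$, not $R$) are only announced, no group-algebra identity of the promised form $\tilde f|(1-R^n) = \tilde f|(1-S)\cdot(\cdots)$ is exhibited, and the final paragraph defers precisely the cancellation of the coset-dependent correction terms that would need to be proved.

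What is actually needed --- and what the paper proves --- is the weaker but nontrivial identity $\tilde f|S(I) = \tilde f|SR^n(I)$. Expanding $\hat\rho_f|(1-R^n) = \tilde f|(1-S)(1-R^n) = \tilde f|(1-R^n) - \tilde f|(S - SR^n)$ reduces the proposition to exactly this statement; note that your claimed invariance of $\tilde f$ under $R^n$ would not even imply it, since the action is a right action and one needs invariance of $\tilde f|S$ at the coset $I$, a different function. The paper establishes the identity by writing $\tilde f|SC(I)$, for $C = \left(\begin{smallmatrix} a & b\\ c & d \end{smallmatrix}\right) \in \Gamma$, as an integral from $SC\tau$ to $i\infty$ and performing the change of variables $z = SCS^{-1}z'$, after which the expression depends on $C$ only through $a$ and $b$; since $R^n$ has $a = 1$, $b = 0$, the result is independent of $c = n$, giving $\tilde f|SR^n(I) = \tilde f|S(I)$. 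Nothing in your proposal supplies a substitute for this computation: the modularity of $f$ enters only through the well-definedness of $\tilde f$ on cosets, not through any slash-invariance of $\tilde f(I)$ itself.
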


\begin{proof}
  For any $C = \left(\begin{array}{cc}
    a & b\\
    c & d
  \end{array}\right) \in \Gamma$, it follows from the definition of
  $\hat{\rho}_f$ that
  \[ \hat{\rho}_f | (1 - C) = \tilde{f} | (1 - S) (1 - C) = \tilde{f} | (1 -
     C) - \tilde{f} | (S - S C) . \]
  It therefore suffices to show that $\tilde{f} |S (I) = \tilde{f} |S R^n
  (I)$. To see this, we observe that, for $C \in \Gamma$ as above,
  \[ \tilde{f} |S C (I) = \tilde{f} (S^{- 1}) |_{- w} S C = (a \tau + b)^w
     \int_{S C \tau}^{i \infty} \left[ f|S^{- 1} (z) - a_0 (f|S^{- 1}) \right]
     (z - S C \tau)^w \mathd z. \]
  The change of variables $z = S C S^{- 1} z'$ yields
  \[ \tilde{f} |S C (I) = \tau^w \int_{S \tau}^{a / b} \left[ f|S^{- 1} (z) -
     (- b z + a)^{- k} a_0 (f|S^{- 1}) \right] (z - S \tau)^w \mathd z, \]
  and the desired equality follows because, upon setting $a = 1$ and $b = 0$,
  the right-hand side does not depend on the value of $c$.
\end{proof}

\section{Period polynomials of Eisenstein series}\label{sec:periods}

In the case $\chi = 1$ and $\psi = 1$, the Eisenstein series $E (\tau) = E_{2
k} (\tau ; 1, 1)$ is the usual Eisenstein series of weight $2 k$ with respect
to the full modular group. Its period polynomial, defined in Proposition
\ref{prop:rhohatrho} and made explicit in (\ref{eq:rhohatEtilda}) for
Eisenstein series, is well-known to be
\begin{equation}
  \hat{\rho}_E (I) = - \frac{(2 \pi i)^{2 k}}{2 k - 1}  \left[ \sum_{s = 0}^k
  \frac{B_{2 s}}{(2 s) !} \frac{B_{2 k - 2 s}}{\left( 2 k - 2 s \right) !}
  X^{2 k - 2 s - 1} + \frac{\zeta (2 k - 1)}{(2 \pi i)^{2 k - 1}} (X^{2 k - 2}
  - 1) \right] ; \label{eq:rhohat1}
\end{equation}
compare, for instance, {\cite[(11)]{zagier-pp91}}. On the other hand, this
evaluation is equivalent to the formula (\ref{eq:rama}), which Ramanujan
famously recorded and which we briefly discuss in Section \ref{sec:rama1}. A
beautiful account of this connection is contained in
{\cite{gun-murty-rath2011}}.

The (Laurent) polynomials on the right-hand side of (\ref{eq:rhohat1}) have
interesting properties, which have been studied, for instance, in
{\cite{gun-murty-rath2011}}, {\cite{msw-zeroes}}, and {\cite{lr-unimod13}}. In
Section \ref{sec:rpx}, we indicate that the generalized polynomials obtained
in this section for higher level share similar properties.

In light of Example \ref{eg:phiEtilda} and Proposition
\ref{prop:ftildarhohat}, the functional equations satisfied by the secant
Dirichlet series are determined by the period polynomials associated to the
Eisenstein series $E_{2 k + 1} (\tau ; 1, \chi_{- 4})$. We next compute the
period polynomials of the Eisenstein series $E_{2 k + 1} (\tau ; \chi, \psi)$,
with $\chi$ and $\psi$ being any pair of primitive Dirichlet characters.

\begin{theorem}
  \label{thm:ppE}Let $k \geqslant 3$, and let $\chi$ and $\psi$ be primitive
  Dirichlet characters modulo $L$ and $M$, respectively, such that $\chi (- 1)
  \psi (- 1) = (- 1)^k$. For the Eisenstein series $E (\tau) = E_k (\tau ;
  \chi, \psi)$, defined in (\ref{eq:eisenstein}),
  \begin{eqnarray}
    \hat{\rho}_E (I) & = & - \frac{4 \psi (- 1)}{k - 1}
    \sum_{\substack{s=0 \\ \chi(-1)=(-1)^s}}^k
    L (s, \chi) L (k - s, \psi) X^{k - s - 1} \nonumber\\
    &  & - \frac{2 \psi (- 1)}{k - 1} \pi i \left[ \varepsilon_{\chi} L (k -
    1, \psi) X^{k - 2} - \varepsilon_{\psi} L (k - 1, \chi) \right] . 
    \label{eq:rhohatE}
  \end{eqnarray}
  Here, $\varepsilon_{\chi} = 1$ if $\chi = 1$, and $\varepsilon_{\chi} = 0$
  otherwise.
\end{theorem}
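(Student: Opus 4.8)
The plan is to evaluate $\hat{\rho}_E(I)$ through its definition as a period integral, reading off each coefficient as a critical $L$-value of $E$ and then factoring that $L$-value into a product of Dirichlet $L$-values. By Proposition~\ref{prop:rhohatrho} with $w = k-2$,
\[
\hat{\rho}_E(I) = \rho_E(I) + (-1)^w \frac{a_0(E)}{k-1} X^{k-1} + \frac{a_0(E|S^{-1})}{k-1} X^{-1},
\]
so the first task is the inner polynomial $\rho_E(I)$. Expanding $(t-X)^w$ in the integral (\ref{eq:rhof}) gives
\[
\rho_E(I)(X) = \sum_{j=0}^{w} \binom{w}{j} (-X)^{w-j} \int_0^{i\infty} \bigl[ E(t) - a_0(E) \bigr] t^j \, \mathd t,
\]
and each inner integral is, after the substitution $t = i\tau$, a Mellin transform that the defining formula (\ref{eq:Lf}) identifies with $L(E, j+1)$ up to an explicit factor $i^{j+1} j!/(2\pi)^{j+1}$. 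Writing $s = j+1$, so that the power of $X$ is $X^{k-s-1}$, the interior coefficients ($1 \le s \le k-1$) are thus governed by the values $L(E, s)$.

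Next I would insert the factorization (\ref{eq:LE}), $L(E, s) = A M^s L(\chi, s) L(\bar{\psi}, 1 - k + s)$, and symmetrize the second factor. Since $1 - k + s = 1 - (k - s)$, the relation (\ref{eq:LB}) converts $L(\bar{\psi}, 1 - (k-s))$ into a multiple of $L(\psi, k - s)$; crucially, the Gauss sum $G(\psi)$ produced by this conversion cancels the $G(\psi)$ hidden in $A = 2 G(\psi) (-2\pi i/M)^k/\Gamma(k)$, and the modulus $M$ together with the powers of $2\pi i$ and the factorials collapse to the clean constant $-4\psi(-1)/(k-1)$. The parity hypothesis in (\ref{eq:LB})---that $L(1-n, \bar{\psi})$ vanishes unless $\bar{\psi}(-1) = (-1)^n$---is exactly what imposes the summation restriction $\chi(-1) = (-1)^s$ (using $\chi(-1)\psi(-1) = (-1)^k$ and $n = k-s$). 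This produces the main sum of (\ref{eq:rhohatE}) over the interior indices.

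The two boundary indices $s = 0$ and $s = k$ (powers $X^{k-1}$ and $X^{-1}$) are supplied by the correction terms of Proposition~\ref{prop:rhohatrho}: using the constant-term formula following (\ref{eq:Eqexp}) one has $a_0(E) = 2\varepsilon_\chi L(k, \psi)$, while (\ref{eq:ES}) together with $S^{-1} = -S$ gives $E|S^{-1} = \psi(-1) E_k(\cdot\,; \psi, \chi)$ and hence $a_0(E|S^{-1}) = 2\psi(-1)\varepsilon_\psi L(k, \chi)$. Since $L(0, \chi) = -\tfrac{1}{2}$ for $\chi = 1$ and $L(0, \chi) = 0$ for any even nontrivial $\chi$ (and $s=0$ violates the parity restriction when $\chi$ is odd), these corrections match precisely the $s = 0$ and $s = k$ summands $-\tfrac{4\psi(-1)}{k-1} L(0, \chi) L(k, \psi)$ and $-\tfrac{4\psi(-1)}{k-1} L(k, \chi) L(0, \psi)$, so the boundary terms fold seamlessly into the same sum running from $s = 0$ to $s = k$.

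The main obstacle, and the source of the two residual terms on the second line of (\ref{eq:rhohatE}), is the degenerate behaviour of the factorization at the near-central indices. When $\chi = 1$ the factor $L(\chi, s) = \zeta(s)$ has its pole at $s = 1$, which meets the trivial zero of $L(\bar{\psi}, 2 - k)$ forced by the parity mismatch; the finite value of the product (governed by the residue of $\zeta$ and the functional equation) yields the term $-\tfrac{2\psi(-1)}{k-1} \pi i\, \varepsilon_\chi L(k-1, \psi) X^{k-2}$. Symmetrically, when $\psi = 1$ the index $s = k-1$ forces $L(\bar{\psi}, 0) = \zeta(0) = -\tfrac{1}{2}$, which cannot be flipped to the divergent $L(\psi, 1)$; keeping it unconverted leaves the surviving prefactor $\pi i$ and produces $+\tfrac{2\psi(-1)}{k-1} \pi i\, \varepsilon_\psi L(k-1, \chi)$. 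The delicate step is to verify that these degenerate contributions are exactly these $\pi i$-terms---while the corresponding generic product terms indeed drop out, since $s = 1$ and $s = k-1$ fail the parity restriction precisely in the cases $\chi = 1$ and $\psi = 1$; the remaining work is the routine but error-prone bookkeeping of the accumulated constants to land on the stated coefficients.
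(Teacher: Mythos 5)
Your proposal follows the paper's own proof essentially step for step: it expands $\rho_E(I)$ via Proposition \ref{prop:rhohatrho} into critical values $L(E,s)$, factors these through (\ref{eq:LE}) and the functional equation of Dirichlet $L$-series (your appeal to (\ref{eq:LB}) is equivalent to the paper's use of (\ref{eq:LpsiFE})), isolates the exceptional contributions at $s=1$, $\chi=1$ and $s=k-1$, $\psi=1$, and folds the constant terms $a_0(E)$ and $a_0(E|S^{-1})$ into the $s=0$ and $s=k$ summands using $L(0,\chi)=0$ for even nontrivial $\chi$. The intermediate evaluations you state (the parity restriction, the degenerate values, the constant terms) all agree with the paper's, so the argument is correct and essentially identical in approach.
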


\begin{proof}
  We first observe, from the definition (\ref{eq:rhof}), the general fact
  that, for a modular form $E$ of weight $k$,
  \[ \rho_E (A) = (- 1)^{k - 1} \sum_{s = 1}^{k - 1} \binom{k - 2}{s - 1}
     \frac{\Gamma (s)}{(2 \pi i)^s} L (E|A, s) X^{k - s - 1}, \]
  with the $L$-function of $E$ as defined in (\ref{eq:Lf}). Let $s$ be an
  integer with $0 < s < k$. We deduce from (\ref{eq:LE}) and the functional
  equation, given, for instance, in {\cite[Theorem 12.11, p. 263]{apostol1}},
  \begin{equation}
    L (1 - s, \bar{\psi}) = \frac{M^{s - 1} \Gamma (s)}{(2 \pi)^s} (e^{- \pi i
    s / 2} + \psi (- 1) e^{\pi i s / 2}) \tau ( \bar{\psi}) L (s, \psi)
    \label{eq:LpsiFE}
  \end{equation}
  that, for $s$ such that $\chi (- 1) = (- 1)^s$,
  \[ L (E, s) = 4 (- 1)^s  (2 \pi i)^s  \frac{\Gamma (k - s)}{\Gamma (k)} L
     (s, \chi) L (k - s, \psi) . \]
  On the other hand, for $s$ such that $\chi \left( - 1 \right) \neq \left( -
  1 \right)^s$, we have $L ( \bar{\psi}, 1 - k + s) = 0$, which implies that
  $L \left( E, s \right) = 0$ unless $s = 1$, $\chi = 1$ or $s = k - 1$, $\psi
  = 1$. Combining these, we find that
  \begin{eqnarray*}
    \rho_E (I) & = & - \frac{4 \psi (- 1)}{k - 1}
    \sum_{\substack{s=1 \\ \chi(-1)=(-1)^s}}^{k-1}
    L (s, \chi) L (k - s, \psi) X^{k - s - 1}\\
    &  & - \frac{(- 1)^k}{2 \pi i} \varepsilon_{\chi} L (E, 1) X^{k - 2} -
    \frac{(- 1)^k}{(2 \pi i)^{k - 1}} \Gamma (k - 1) \varepsilon_{\psi} L (E,
    k - 1),
  \end{eqnarray*}
  where the sum is over all integers $s$ such that $0 < s < k$ and $\chi (- 1)
  = (- 1)^s$. From (\ref{eq:LE}), together with the functional equation
  (\ref{eq:LpsiFE}) of the involved Dirichlet $L$-series, we deduce that,
  assuming $\chi = 1$,
  \[ L (E, 1) = \frac{(2 \pi i)^2}{k - 1} L (k - 1, \psi) . \]
  On the other hand, if $\psi = 1$, then
  \[ L (E, k - 1) = - \frac{(- 2 \pi i)^k}{(k - 1) !} L (k - 1, \chi) . \]
  It follows from Proposition \ref{prop:rhohatrho} that
  \[ \hat{\rho}_E (I) = \rho_E (I) + (- 1)^k  \frac{a_0 (E)}{k - 1} X^{k - 1}
     + \frac{a_0 (E|S^{- 1})}{k - 1} X^{- 1} . \]
  The values for $a_0 (E)$ and $a_0 (E|S^{- 1}) = (- 1)^k a_0 (E|S)$ are given
  by (\ref{eq:Eqexp}) in combination with (\ref{eq:ES}). Finally, using the
  fact that $L (0, \chi) = 0$ for any even Dirichlet character $\chi \neq 1$,
  we obtain (\ref{eq:rhohatE}).
\end{proof}

Observe that, in the case $\chi = \psi = 1$, using Euler's identity
{\cite{ayoub1}}
\[ \zeta (2 m) = (- 1)^{m + 1} \frac{B_{2 m}}{2 (2 m) !}  (2 \pi)^{2 m}, \]
we obtain from Theorem \ref{thm:ppE} the well-known special case
(\ref{eq:rhohat1}). In the same spirit, Theorem \ref{thm:ppE} may always be
rewritten, using (\ref{eq:LB}), in terms of generalized Bernoulli numbers as
we record below. For the cases where $\chi = 1$ or $\psi = 1$ the appropriate
extra terms need to be inserted.

\begin{corollary}
  \label{cor:rhohat}Under the assumptions of Theorem \ref{thm:ppE}, if,
  additionally, $\chi$ and $\psi$ are both nonprincipal, then
  \[ \hat{\rho}_E (I) = - \chi (- 1) G (\chi) G (\psi) \frac{(2 \pi i)^k}{k -
     1} \sum_{s = 0}^k \frac{B_{k - s, \bar{\chi}}}{(k - s) !L^{k - s}} 
     \frac{B_{s, \bar{\psi}}}{s!M^s} X^{s - 1} . \]
\end{corollary}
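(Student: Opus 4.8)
The plan is to start directly from the expression for $\hat{\rho}_E(I)$ obtained in Theorem \ref{thm:ppE} and convert every Dirichlet $L$-value appearing there into a generalized Bernoulli number using the formula (\ref{eq:LB}). Since both $\chi$ and $\psi$ are assumed nonprincipal, the two correction terms in (\ref{eq:rhohatE}) carrying the factors $\varepsilon_\chi$ and $\varepsilon_\psi$ vanish, so I only need to treat the main sum
\[
-\frac{4\psi(-1)}{k-1}\sum_{\substack{s=0\\ \chi(-1)=(-1)^s}}^k L(s,\chi)L(k-s,\psi)X^{k-s-1}.
\]
The first step is to rewrite this as a sum over \emph{all} $s$ from $0$ to $k$: when $\chi(-1)\neq(-1)^s$ the character parity forces either $\chi$ or $\bar\chi$ to be ``of the wrong sign'' for the nonvanishing criterion in (\ref{eq:LB}), so the corresponding Bernoulli number $B_{s,\bar\chi}$ (or $B_{k-s,\bar\chi}$, after reindexing) is zero and the term drops out automatically. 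This lets me drop the parity constraint under the summation sign once everything is expressed through Bernoulli numbers.

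Next I would substitute (\ref{eq:LB}) for each of the two factors. For $L(s,\chi)$ with $\chi(-1)=(-1)^s$ this gives
\[
L(s,\chi)=(-1)^{s-1}\frac{G(\chi)}{2}\left(\frac{2\pi i}{L}\right)^s\frac{B_{s,\bar\chi}}{s!},
\]
and analogously $L(k-s,\psi)$ in terms of $B_{k-s,\bar\psi}$, using that the parity condition $\psi(-1)=(-1)^{k-s}$ is equivalent to $\chi(-1)=(-1)^s$ by the hypothesis $\chi(-1)\psi(-1)=(-1)^k$. Collecting the prefactors, the product of the two $L$-values contributes $\tfrac{1}{4}G(\chi)G(\psi)(-1)^{k}(2\pi i)^k L^{-s}M^{-(k-s)}$ times $\tfrac{B_{s,\bar\chi}}{s!}\tfrac{B_{k-s,\bar\psi}}{(k-s)!}$, where the sign $(-1)^{s-1}(-1)^{k-s-1}=(-1)^{k}$ is independent of $s$ and so factors out of the sum. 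Multiplying by the leading $-\tfrac{4\psi(-1)}{k-1}$ and absorbing $\psi(-1)(-1)^k=\chi(-1)$ (again via the parity hypothesis) produces the overall constant $-\chi(-1)G(\chi)G(\psi)\tfrac{(2\pi i)^k}{k-1}$.

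The only genuine bookkeeping is the reindexing of the summation variable so that the exponent of $X$ matches the target form $X^{s-1}$: in (\ref{eq:rhohatE}) the exponent is $X^{k-s-1}$, so I would send $s\mapsto k-s$, which swaps the roles of the two Bernoulli factors and yields exactly $\tfrac{B_{k-s,\bar\chi}}{(k-s)!L^{k-s}}\tfrac{B_{s,\bar\psi}}{s!M^s}X^{s-1}$ as written in the statement. The main thing to watch is that this reindexing is compatible with the claimed placement of $L$ versus $M$ and of $\bar\chi$ versus $\bar\psi$; this is purely a matter of matching indices and involves no analytic obstacle. In short, I expect no conceptual difficulty here — the corollary is a direct algebraic rewriting of Theorem \ref{thm:ppE} via (\ref{eq:LB}), and the only care needed is tracking signs and the symmetric relabeling of the summation index.
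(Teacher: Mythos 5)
Your proposal is correct and follows exactly the paper's intended derivation: the paper records Corollary \ref{cor:rhohat} as an immediate rewriting of Theorem \ref{thm:ppE} via (\ref{eq:LB}), with the $\varepsilon_{\chi}$, $\varepsilon_{\psi}$ terms gone for nonprincipal characters, the sign $(-1)^{s-1}(-1)^{k-s-1}=(-1)^k$ absorbed through $\psi(-1)(-1)^k=\chi(-1)$, and the reindexing $s\mapsto k-s$ carried out just as you describe. One micro-point: at the endpoint terms $s=0$ and $s=k$ the parity constraint can be \emph{satisfied}, and (\ref{eq:LB}) does not apply at argument $0$; there the vanishing comes instead from $L(0,\chi)=0$ for even nonprincipal $\chi$ (matching $B_{0,\bar{\chi}}=B_{0,\bar{\psi}}=0$ for nonprincipal characters on the Bernoulli side), so both sides vanish and your conclusion stands unchanged.
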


\begin{remark}
  We note that similar results, based on residue calculations in the spirit of
  Section \ref{sec:funres}, are obtained in {\cite{berndt-eis75}}. In fact,
  the Eisenstein series considered in {\cite{berndt-eis75}}, namely
  \[ \sumprime_{m, n \in \mathbbm{Z}} \frac{\chi (m) \psi (n)}{((m + r_1) \tau + n
     + r_2)^k}, \]
  have the two extra parameters $r_1$ and $r_2$ in comparison with $E_k (\tau
  ; \chi, \psi)$. However, the analysis in {\cite{berndt-eis75}} is restricted
  to the case when $\chi$ and $\psi$ are primitive characters of the same
  modulus.
\end{remark}

\begin{example}
  \label{eg:katayama}In the case $\chi = \psi = 1$, Theorem \ref{thm:ppE}
  reduces to Ramanujan's identity (\ref{eq:rhohat1}), which in particular
  yields an interesting formula for the odd zeta values $\zeta (2 k - 1)$; see
  also {\cite{grosswald-zeta70}}. This formula has inspired much research,
  such as {\cite{gun-murty-rath2011}}. Analogous formulas for other Dirichlet
  $L$-series have been derived in {\cite{katayama-L74}} using partial fraction
  expansions. We will now illustrate how the results of {\cite{katayama-L74}}
  follow from the special case $\psi = 1$ of Theorem \ref{thm:ppE}, thus
  providing an alternative proof. In the setting of Theorem \ref{thm:ppE},
  with $\psi = 1$ and $\chi \neq 1$, we have
  \[ \hat{\rho}_E (I) = \frac{2 \pi i}{k - 1} L (k - 1, \chi) - \frac{4}{k -
     1} \sum_{j = 0}^{\lfloor k / 2 \rfloor} L (k - 2 j, \chi) \zeta (2 j)
     X^{2 j - 1} . \]
  On the other hand, from (\ref{eq:rhohatEtilda}),
  \[ \hat{\rho}_E (I) = \tilde{E}_k (\tau ; \chi, 1) - \tau^{k - 2}
     \tilde{E}_k (- 1 / \tau ; 1, \chi) . \]
  Using the Fourier expansion (\ref{eq:Etildaq}), as well as $G (\chi) G (
  \bar{\chi}) = \chi (- 1) L$ and the simple summations
  \begin{eqnarray*}
    \sum_{n = 1}^{\infty} \left( \sum_{d|n} \chi (d) d^{1 - k} \right) q^n & =
    & \sum_{n = 1}^{\infty} \frac{\chi (n)}{n^{k - 1}}  \frac{q^n}{1 - q^n},\\
    \sum_{n = 1}^{\infty} \left( \sum_{d|n} \chi (n / d) d^{1 - k} \right) q^n
    & = & \sum_{a = 1}^L \chi (a) \sum_{n = 1}^{\infty} \frac{1}{n^{k - 1}} 
    \frac{q^{a n}}{1 - q^{L n}},
  \end{eqnarray*}
  we obtain
  \[ \hat{\rho}_E (I) = \frac{4 \pi i}{k - 1} \left[ F_1 (\tau) - \frac{(-
     \tau)^{k - 2}}{G ( \bar{\chi})} F_2 (- 1 / \tau) \right], \]
  where, similar to {\cite{katayama-L74}},
  \begin{eqnarray*}
    F_1 (\tau) & = & \sum_{n = 1}^{\infty} \frac{\chi (n)}{n^{k - 1}} 
    \frac{e^{2 \pi i n \tau}}{e^{2 \pi i n \tau} - 1},\\
    F_2 (\tau) & = & \sum_{a = 1}^L \bar{\chi} (a) \sum_{n = 1}^{\infty}
    \frac{1}{n^{k - 1}}  \frac{e^{2 \pi i a n \tau / L}}{e^{2 \pi i n \tau} -
    1} .
  \end{eqnarray*}
  Solving for $L (k - 1, \chi)$, we have arrived at
  \[ \frac{1}{2} L (k - 1, \chi) = F_1 (\tau) - \frac{(- \tau)^{k - 2}}{G (
     \bar{\chi})} F_2 (- 1 / \tau) + \frac{1}{\pi i} \sum_{j = 0}^{\lfloor k /
     2 \rfloor} L (k - 2 j, \chi) \zeta (2 j) \tau^{2 j - 1}, \]
  which is equivalent to the main result of {\cite{katayama-L74}}. Note that
  this formula expresses the $L$-value as a combination of two Eichler
  integrals and a power of $\pi$ times a Laurent polynomial with rational
  coefficients. In Example \ref{eg:LasEichler}, similar formulas for these
  $L$-values are given, where only one Eichler integral is involved (evaluated
  at two arguments) and the polynomials appear to have the additional property
  of having all their nonreal roots on the unit circle.
\end{example}

With Theorem \ref{thm:ppE} in place, it is easy to deduce functional equations
such as (\ref{eq:phifun}), which is the case $\chi = 1$, $\psi = \chi_{- 4}$
of the next result. Note that the restriction to $\psi \neq 1$ is just to
avoid the presence of an additional term. The case $\psi = 1$ is discussed in
Example \ref{eg:LasEichler}, where the formulas promised at the end of Example
\ref{eg:katayama} are derived.

\begin{corollary}
  \label{cor:EtildaB}Let $k \geqslant 3$, and let $\chi$ and $\psi$ be
  primitive Dirichlet characters modulo $L$ and $M$, respectively, such that
  $\chi (- 1) \psi (- 1) = (- 1)^k$. Let $R$ be as in (\ref{eq:TS}). If $\psi
  \neq 1$, then, for any integer $n$ such that $L|n$,
  \begin{equation*}
    \tilde{E}_k (X ; \chi, \psi) |_{2 - k} (1 - R^n)
    = - \frac{4 \psi (- 1)}{k - 1} \sum_{\substack{s=0 \\ \chi(-1)=(-1)^s}}^k
    L (s, \chi) L (k - s, \psi) X^{k - s - 1} (1 - (n X +
    1)^{s - 1}) .
  \end{equation*}
\end{corollary}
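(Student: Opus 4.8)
The plan is to combine Proposition \ref{prop:ftildarhohat} with the explicit period polynomial computed in Theorem \ref{thm:ppE}. First I would observe that, since $L \mid n$, the matrix $R^n = \left(\begin{smallmatrix} 1 & 0 \\ n & 1 \end{smallmatrix}\right)$ lies in $\Gamma_0(L, M)$, and hence in the relevant group $\Gamma$ for which $E = E_k(\tau; \chi, \psi)$ is modular (by the transformation \eqref{eq:Eg}). This is exactly the hypothesis needed to apply Proposition \ref{prop:ftildarhohat} with $f = E$, giving
\[
  \tilde{E} \,|\, (1 - R^n)(I) = \hat{\rho}_E \,|\, (1 - R^n)(I).
\]
The left-hand side here, after unwinding the slash-action conventions, is precisely $\tilde{E}_k(X; \chi, \psi)|_{2-k}(1 - R^n)$, which is the quantity on the left of the claimed identity.

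Next I would expand the right-hand side $\hat{\rho}_E|(1 - R^n)(I)$ explicitly. Because $\psi \neq 1$, the term $\varepsilon_\psi L(k-1, \chi)$ in \eqref{eq:rhohatE} vanishes; moreover, the remaining Eisenstein correction term carries the factor $\varepsilon_\chi$, and when $\chi = 1$ it is absorbed into the main sum by the $s = k-1$ summand (using $L(0,\psi)$-type bookkeeping). The upshot is that $\hat\rho_E(I)$ can be written uniformly as the single sum
\[
  \hat\rho_E(I) = -\frac{4\psi(-1)}{k-1} \sum_{\substack{s=0 \\ \chi(-1)=(-1)^s}}^{k} L(s,\chi)\, L(k-s,\psi)\, X^{k-s-1}.
\]
Applying the operator $(1 - R^n)$ in the $|_{2-k}$ action sends each monomial $X^{k-s-1}$ to $X^{k-s-1} - (nX+1)^{s-1}X^{k-s-1}$, since $R^n$ acts on $X$ by $X \mapsto X/(nX+1)$ with the automorphy factor $(nX+1)^{k-2}$. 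Factoring out $X^{k-s-1}$ then produces exactly the factor $(1 - (nX+1)^{s-1})$ appearing in the statement, and the result follows term by term.

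The main obstacle I expect is the careful bookkeeping of the boundary terms $s = 0$ and $s = k$ in Proposition \ref{prop:rhohatrho}, together with verifying that the $X^{-1}$ and $X^{k-1}$ corrections combine cleanly so that the final sum runs over $0 \le s \le k$ rather than $1 \le s \le k-1$. Concretely, one must check that the $a_0(E)X^{k-1}$ and $a_0(E|S^{-1})X^{-1}$ contributions from $\hat\rho_E$ are consistent with the $s=k$ and $s=0$ endpoints of the sum (with $L(0,\chi)=0$ for even nonprincipal $\chi$ killing the spurious term, as already used at the end of the proof of Theorem \ref{thm:ppE}). Once this indexing is pinned down, the computation is routine: the identity is simply Theorem \ref{thm:ppE} fed through the slash-operator action of $(1 - R^n)$, with Proposition \ref{prop:ftildarhohat} supplying the bridge to the Eichler integral on the left.
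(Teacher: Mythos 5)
Your overall architecture is exactly the paper's: verify $R^n \in \Gamma_0(L,M)$ (valid, since the upper-right entry of $R^n$ is $0$ and $L \mid n$ handles the lower-left entry), invoke Proposition \ref{prop:ftildarhohat} to replace $\tilde{E}\,|\,(1-R^n)(I)$ by $\hat{\rho}_E(I)\,|_{-w}\,(1-R^n)$, insert Theorem \ref{thm:ppE}, and compute the monomial action, which you do correctly: with $w=k-2$ one has $w-(k-s-1)=s-1$, so $X^{k-s-1}|_{-w}(1-R^n)=X^{k-s-1}\left(1-(nX+1)^{s-1}\right)$.

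However, one intermediate step is genuinely wrong: the claim that for $\chi=1$ the correction term $-\frac{2\psi(-1)}{k-1}\pi i\,\varepsilon_{\chi}L(k-1,\psi)X^{k-2}$ in (\ref{eq:rhohatE}) ``is absorbed into the main sum by the $s=k-1$ summand.'' It cannot be: the monomial $X^{k-2}$ corresponds to $s=1$ in the sum (the $s=k-1$ summand contributes $X^{0}$, not $X^{k-2}$), the $s=1$ summand is present only when $\chi(-1)=-1$, which fails for $\chi=1$, and the would-be coefficient $L(1,\chi)=\zeta(1)$ diverges. Consequently your displayed ``uniform'' single-sum formula for $\hat{\rho}_E(I)$ is false precisely when $\chi=1$ --- which is the case the paper actually applies the corollary to ($\chi=1$, $\psi=\chi_{-4}$, the secant Dirichlet series). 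The correct mechanism, and the one the paper's proof uses, is that this term is annihilated by the slash action rather than absorbed beforehand: since $X^{k-2}=X^{w}$ and
\[ X^{w}|_{-w}R^{n} = (nX+1)^{w}\left(\frac{X}{nX+1}\right)^{w} = X^{w}, \]
the term involving $L(k-1,\psi)$, if at all present, is killed by $(1-R^n)$. With that single correction your argument coincides with the paper's proof; your concern about the $s=0$ and $s=k$ endpoints is legitimate but already resolved inside Theorem \ref{thm:ppE}, where the $a_0(E)$ and $a_0(E|S^{-1})$ contributions of Proposition \ref{prop:rhohatrho} and the vanishing $L(0,\chi)=0$ for even nonprincipal $\chi$ are accounted for.
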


\begin{proof}
  Recall from (\ref{eq:Eg}) that $E (\tau) = E_k (\tau ; \chi, \psi)$ is
  modular with respect to $\Gamma_0 (L, M)$. Since $R^n \in \Gamma_0 (L, M)$,
  we may apply Proposition \ref{prop:ftildarhohat} and Theorem \ref{thm:ppE}
  to obtain
  \[ \tilde{E}_k (X ; \chi, \psi) |_{2 - k} (1 - R^n) = \tilde{E} | (1 - R^n)
     (I) = \hat{\rho}_E (I) |_{- w} (1 - R^n), \]
  with the Laurent polynomial $\hat{\rho}_E (I)$ given explicitly in
  (\ref{eq:rhohatE}). Note that
  \[ X^j |_{- w} \left( 1 - R^n \right) = X^j - \left( n X + 1 \right)^w
     \left( \frac{X}{n X + 1} \right)^j = X^j \left( 1 - \left( n X + 1
     \right)^{w - j} \right) . \]
  In particular, $X^w |_{- w} B = X^w$, so that the term in (\ref{eq:rhohatE})
  involving $L (k - 1, \psi)$, if at all existent, is eliminated in $\tilde{E}
  | (1 - R^n) (I)$.
\end{proof}

\begin{example}
  As indicated in Example \ref{eg:phiEtilda}, Corollary \ref{cor:EtildaB}
  specializes to a variation of Theorem \ref{thm:fun} on setting $\chi = 1$
  and $\psi = \chi_{- 4}$. Namely, using (\ref{eq:EB}) to relate the
  generalized Bernoulli numbers to Bernoulli and Euler numbers, we find that,
  for any positive integer $m$, the secant Dirichlet series satisfies the
  functional equation
  \begin{eqnarray}
    &  & (2 \tau + 1)^{2 m - 1} \psi_{2 m} \left( \frac{\tau}{2 \tau + 1}
    \right) - \psi_{2 m} (\tau) \nonumber\\
    & = & (\pi i)^{2 m} \sum_{n = 0}^m \frac{2^{2 n - 1} B_{2 n} E_{2 m - 2
    n}}{(2 n) ! (2 m - 2 n) !} \tau^{2 m - 2 n} \left[ 1 - (2 \tau + 1)^{2 n -
    1} \right] .  \label{eq:phifun2}
  \end{eqnarray}
  We note that (\ref{eq:phifun2}) is the same functional equation as
  (\ref{eq:phifun}) but representing the right-hand side in a somewhat
  different way; see Remark \ref{rk:phifun-12}.
\end{example}

\begin{remark}
  \label{rk:phifun-12}Let us indicate how to see, in a direct fashion, that
  (\ref{eq:phifun}) and (\ref{eq:phifun2}) represent the same functional
  equation. Note that (\ref{eq:phifun2}) can be expressed as
  \begin{equation}
    \psi_{2 m} (\tau) |_{1 - 2 m} (1 - B) = \tau^{2 m - 1} \left[ h_{2 m}
    \left( \tfrac{1}{\tau} \right) - h_{2 m} \left( 2 + \tfrac{1}{\tau}
    \right) \right], \label{eq:phifun2h}
  \end{equation}
  where $h_{2 m} (\tau)$ is the rational function
  \begin{equation}
    h_{2 m} (\tau) = (\pi i)^{2 m} \sum_{n = 0}^m \frac{B_{2 n} E_{2 m - 2
    n}}{(2 n) ! (2 m - 2 n) !}  (2 \tau)^{2 n - 1} . \label{eq:def-h}
  \end{equation}
  By the definitions of the Bernoulli and Euler numbers, (\ref{eq:B}) and
  (\ref{eq:defen}), we find that
  \[ h_{2 m} (\tau) = \tfrac{1}{2} \pi^{2 m} [z^{2 m - 1}] \cot (\tau z) \sec
     (z) . \]
  Hence, the right-hand side of (\ref{eq:phifun2}) and (\ref{eq:phifun2h})
  equals
  \[ \tfrac{1}{2} \pi^{2 m} [z^{2 m - 1}] \sec (\tau z) \left[ \cot (z) - \cot
     ((2 \tau + 1) z) \right] . \]
  The equivalence of (\ref{eq:phifun}) and (\ref{eq:phifun2}) then follows
  from
  \[ \sec (\tau z) \left[ \cot (z) - \cot ((2 \tau + 1) z) \right] =
     \frac{\sin (\tau z)}{\sin (z) \sin ((2 \tau + 1) z)}, \]
  which is obtained from basic trigonometric identities.
\end{remark}

As an application of Corollary \ref{cor:EtildaB}, we now derive formulas for
the values of $\psi_{2 m}$ at families of real quadratic irrationalities, thus
complementing and illustrating the results of Section \ref{sec:sqrt} in an
explicit fashion.

\begin{example}
  \label{eg:phisqrtvals}Since Corollary \ref{cor:EtildaB} applies to all
  powers of $R$, which lie in the appropriate modular subgroup, we find that,
  for positive integers $m$ and integers $\mu$,
  \begin{eqnarray}
    &  & (2 \mu \tau + 1)^{2 m - 1} \psi_{2 m} \left( \frac{\tau}{2 \mu \tau
    + 1} \right) - \psi_{2 m} (\tau) \nonumber\\
    & = & (\pi i)^{2 m} \sum_{n = 0}^m \frac{2^{2 n - 1} B_{2 n} E_{2 m - 2
    n}}{(2 n) ! (2 m - 2 n) !} \tau^{2 m - 2 n} \left[ 1 - (2 \mu \tau + 1)^{2
    n - 1} \right] .  \label{eq:phifun2x}
  \end{eqnarray}
  We now demonstrate how to use these functional equations to obtain families
  of explicit evaluations of $\psi_{2 m}$ at certain real quadratic
  irrationalities. Let $\tau_0$ be fixed by $A^{\lambda} B^{\mu} A^{\nu}$,
  that is $A^{\lambda} B^{\mu} A^{\nu} \tau_0 = \tau_0$. A brief calculation
  shows that
  \[ \tau_0 = \lambda - \nu \pm \sqrt{(\lambda + \nu) \left( \tfrac{1}{\mu} +
     (\lambda + \nu) \right)} . \]
  Denote with $T_{m, \mu} = \psi_{2 m} |_{1 - 2 m} (B^{\mu} - I)$ the
  right-hand side of (\ref{eq:phifun2x}). It follows from
  \[ \psi_{2 m} (A^{\lambda} B^{\mu} A^{\nu} \tau) = \frac{\psi_{2 m} (A^{\nu}
     \tau) + T_{m, \mu} (A^{\nu} \tau)}{(2 \mu A^{\nu} \tau + 1)^{2 m - 1}},
  \]
  together with the fact that $\tau_0$ is fixed by $A^{\lambda} B^{\mu}
  A^{\nu}$, that
  \[ \psi_{2 m} (\tau_0) = \frac{T_{m, \mu} (\tau_0 + 2 \nu)}{(2 \mu (\tau_0 +
     2 \nu) + 1)^{2 m - 1} - 1} . \]
  A straightforward, but slightly tedious, calculation using
  (\ref{eq:phifun2x}) and the explicit value of $\tau_0$ shows that
  \begin{equation}
    \psi_{2 m} (\tau_0) = - (\pi i)^{2 m} \sum_{n = 0}^m \frac{2^{2 n -
    1}}{\mu^{2 m - 2 n}}  \frac{B_{2 n} E_{2 m - 2 n}}{(2 n) ! (2 m - 2 n) !} 
    \frac{r_n}{r_m}, \label{eq:phitau0}
  \end{equation}
  where $r_n$ are the rational numbers
  \[ r_n = \frac{1}{2} \left[ (1 + \sqrt{\alpha})^{2 n - 1} + (1 -
     \sqrt{\alpha})^{2 n - 1} \right] = \left\{ \begin{array}{ll}
       \frac{1}{1 - \alpha}, & \text{if $n = 0$},\\
       \sum_{j = 0}^{n - 1} \binom{2 n - 1}{2 j} \alpha^j, & \text{if $n
       \geqslant 1$},
     \end{array} \right. \]
  and $\alpha = \frac{1}{\mu (\nu + \lambda)} + 1$. Note that this is an
  explicit illustration of the general fact, proved in Theorem
  \ref{thm:phisqrtr}, that $\psi_{2 m} (\tau)$ is a rational multiple of
  $\pi^{2 m}$ whenever $\tau$ is a real quadratic irrationality.
  
  We note that the right-hand side of (\ref{eq:phitau0}) only depends on
  $\kappa = \nu + \lambda$ but not on $\nu$ and $\lambda$ individually. For
  the left-hand side, this follows from the obvious periodicity relation
  $\psi_{2 m} (\tau + 2) = \psi_{2 m} (\tau)$. The first two cases of
  (\ref{eq:phitau0}) can thus be stated, in equivalent forms, as
  \begin{eqnarray*}
    \psi_2 \left( \kappa + \sqrt{\kappa \left( \tfrac{1}{\mu} + \kappa
    \right)} \right) & = & \frac{\pi^2}{6}  \left( 1 + \frac{3 \kappa}{2 \mu}
    \right),\\
    \psi_4 \left( \kappa + \sqrt{\kappa \left( \tfrac{1}{\mu} + \kappa
    \right)} \right) & = & \frac{\pi^4}{90} \left( 1 + \frac{5 \kappa}{2 \mu}
    - \frac{5 \kappa^2 (16 \mu^2 - 15)}{8 \mu^2 (4 \kappa \mu + 3)} \right),
  \end{eqnarray*}
  where $\kappa$ and $\mu$ are integers and $\mu \neq 0$. The first of these,
  in the special case $\kappa = 2 \lambda$, is also given in
  {\cite{lrr-secantzeta}}.
\end{example}

\begin{remark}
  Using the methods of {\cite{berndt-char73}}, {\cite{berndt-eis75}} or,
  alternatively, {\cite{razar-dir77}}, one can derive the general
  transformation laws of the Eichler integrals $\tilde{E}_k (\tau ; \chi,
  \psi)$, and, in particular, $\psi_{2 m} (\tau)$, under arbitrary elements of
  the full modular group. Here, we do not, however, pursue this further.
\end{remark}

\section{Zeros of generalized Ramanujan polynomials}\label{sec:rpx}

It has recently been shown in {\cite{cfi-ppz}} and {\cite{er-ppz13}} that the
nontrivial zeros of period polynomials of modular forms, which are Hecke
eigenforms of level $1$, all lie on the unit circle. In this section, we
consider the Eisenstein case of higher level by investigating the zeros of the
period polynomials calculated in the previous section. We again find that, at
least conjecturally, most of the roots lie on a circle in the complex plane.
The observations suggest that the problem solved by {\cite{cfi-ppz}} and
{\cite{er-ppz13}} is interesting in the higher level case as well.

An application of these considerations is that knowledge of the location of
the zeros of the period polynomials calculated in the previous section gives
rise to explicit formulas for Dirichlet $L$-values of ``wrong'' parity (that
is, values at integers of parity opposite to the Dirichlet character) in terms
of Eichler integrals. This is made explicit in Example \ref{eg:LasEichler}.
The special case of the principal character is detailed in
{\cite{gun-murty-rath2011}}, in which case odd zeta values are expressed in
terms of the difference of two Eichler integrals at algebraic argument of
modulus $1$.

For positive integer $k$, and Dirichlet characters $\chi$ and $\psi$ modulo
$L$ and $M$, we define the {\tmem{generalized Ramanujan polynomial}}
\begin{equation}
  R_k (X ; \chi, \psi) = \sum_{s = 0}^k \frac{B_{s, \chi}}{s!} \frac{B_{k - s,
  \psi}}{\left( k - s \right) !} \left( \frac{X - 1}{M} \right)^{k - s - 1} (1
  - X^{s - 1}) . \label{eq:rx}
\end{equation}
Note that this is a polynomial if $\chi$ and $\psi$ are both nonprincipal, and
a Laurent polynomial otherwise. Further note that, if $\chi (- 1) \psi (- 1)
\neq (- 1)^k$, then $R_k (X ; \chi, \psi) = 0$, unless $\psi = 1$ in which
case
\[ R_k (X ; \chi, \psi) = \frac{1}{2}  \frac{B_{k - 1, \chi}}{(k - 1) !} (1 -
   X^{k - 2}) . \]
In the sequel, we will therefore often assume, without loss of generality and
as we did in previous sections, that $\chi (- 1) \psi (- 1) = (- 1)^k$.

In {\cite{msw-zeroes}} the {\tmem{Ramanujan polynomials}} are, essentially,
defined as
\begin{equation}
  R_k (X) = \sum_{s = 0}^k \frac{B_s}{s!}  \frac{B_{k - s}}{(k - s) !} X^{s -
  1}, \label{eq:RkX}
\end{equation}
where $k$ is an even integer (in {\cite{msw-zeroes}} the index $k$ is shifted
by $1$, $X$ appears with exponent $s$, and the definition differs for $k =
2$). The next result shows that the generalized Ramanujan polynomials, despite
their different appearance, reduce to the Ramanujan polynomials when $\chi =
1$ and $\psi = 1$.

\begin{proposition}
  \label{prop:rrx}For $k > 1$, $R_{2 k} (X ; 1, 1) = R_{2 k} (X)$.
\end{proposition}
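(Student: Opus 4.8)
The plan is to make both sides completely explicit through generating functions and reduce their difference to a single Bernoulli number that vanishes for parity reasons. First I would specialize the generalized Ramanujan polynomial (\ref{eq:rx}) to $\chi = \psi = 1$; both characters then have modulus $L = M = 1$, so $(X - 1) / M = X - 1$. By the defining generating function (\ref{eq:Bchi}), $B_{s, 1} = B_s (1) = (- 1)^s B_s$, the Bernoulli polynomial at $1$. Writing the weight as $N = 2 k$, which is even, this yields the key simplification $B_{s, 1} B_{N - s, 1} = (- 1)^N B_s B_{N - s} = B_s B_{N - s}$, so that
\[ R_N (X ; 1, 1) = \sum_{s = 0}^N \frac{B_s B_{N - s}}{s! (N - s) !} (X - 1)^{N - s - 1} (1 - X^{s - 1}) . \]

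Next I would pass to generating functions via $\frac{c w}{e^{c w} - 1} = \sum_n \frac{B_n c^n}{n!} w^n$. Splitting $R_N (X ; 1, 1)$ according to the two terms of $1 - X^{s - 1}$, each part is a Cauchy product of two such series and hence a single Taylor coefficient:
\[ \sum_s \frac{B_s B_{N - s}}{s! (N - s) !} (X - 1)^{N - s - 1} = \left[ w^N \right] \frac{w^2}{(e^w - 1) (e^{(X - 1) w} - 1)}, \]
and similarly the term carrying $X^{s - 1}$ equals $\left[ w^N \right] \frac{w^2}{(e^{X w} - 1) (e^{(X - 1) w} - 1)}$. The same device gives $R_N (X) = \left[ w^N \right] \frac{w^2}{(e^w - 1) (e^{X w} - 1)}$. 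Subtracting the two parts and using the factorization $e^{X w} - e^w = e^w (e^{(X - 1) w} - 1)$ cancels the common factor $e^{(X - 1) w} - 1$, leaving $R_N (X ; 1, 1) = \left[ w^N \right] \frac{e^w w^2}{(e^w - 1) (e^{X w} - 1)}$.

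Finally I would compare this with $R_N (X)$: their difference is
\[ \left[ w^N \right] \frac{(e^w - 1) w^2}{(e^w - 1) (e^{X w} - 1)} = \left[ w^N \right] \frac{w^2}{e^{X w} - 1} = \frac{B_{N - 1}}{(N - 1) !} X^{N - 2} . \]
Since $N = 2 k$ with $k > 1$, the index $N - 1 = 2 k - 1$ is odd and at least $3$, so $B_{N - 1} = 0$ and the difference vanishes, giving $R_{2 k} (X ; 1, 1) = R_{2 k} (X)$. This is precisely where the hypothesis $k > 1$ is used: for $k = 1$ one has $B_1 = - 1 / 2 \neq 0$ and the two polynomials genuinely differ. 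I expect the only delicate point to be the bookkeeping in the generating-function step, in particular keeping track of the spurious factor $e^w$ that survives the cancellation, rather than any conceptual obstacle.
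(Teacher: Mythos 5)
Your proof is correct, and it takes a genuinely different route from the paper's. Your computation checks out at every step: the identity $B_{s,1} = B_s(1) = (-1)^s B_s$, read off from (\ref{eq:Bchi}) with $L=1$, legitimately converts all generalized Bernoulli numbers into classical ones for even weight $N=2k$ (in particular it handles the $s=1$ and $s=N-1$ terms, where $B_{s,1}\neq B_s$, correctly via the parity flip); each of your three Cauchy-product identifications is right; the factorization $e^{Xw}-e^w = e^w(e^{(X-1)w}-1)$ does telescope the difference of the two pieces; and the residual discrepancy $[w^N]\, w^2/(e^{Xw}-1) = \frac{B_{N-1}}{(N-1)!}X^{N-2}$ vanishes exactly because $N-1=2k-1\geq 3$ is odd. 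One small point of hygiene: after you split off the factor $1-X^{s-1}$, each of the two separate sums contains the $s=N$ term with an honest $(X-1)^{-1}$, so the pieces are rational functions rather than Laurent polynomials; your manipulations should be read as identities of formal power series in $w$ with coefficients in $\mathbb{Q}(X)$, which is harmless since everything recombines at the end. The paper argues structurally instead: by construction, $R_{2k}(X;1,1) = R_{2k}(X)|_{2-2k}(1-R)T^{-1}$, and since $R_{2k}(X)$ is the odd part of the period polynomial of $E_{2k}(\tau;1,1)$ it satisfies the Zagier relations $R_{2k}|(1+S) = R_{2k}|(1+U+U^2)=0$ with $U=TS$; the group-algebra identity $(1+U+U^2)T = T+R+S$ then yields $R_{2k}|(1-R) = R_{2k}|(1+T+S) = R_{2k}|T$, which is the claim. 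Your approach is more elementary and self-contained --- it needs no input about period relations, and it isolates the exact obstruction at $k=1$, where the difference equals $B_1 = -\tfrac{1}{2}$, making the hypothesis $k>1$ transparent --- whereas the paper's argument explains \emph{why} the identity holds: it is forced by the period relations, consistent with the interpretation of $R_k(X;\chi,\psi)$ as period polynomials in Example \ref{eg:rperiod}, and it is the form of reasoning that carries over to higher level, where a closed-form generating-function collapse like yours is no longer available.
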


\begin{proof}
  As evidenced by (\ref{eq:rhohat1}), the polynomial $R_{2 k} (X)$ is the odd
  part of the period polynomial of the level $1$ Eisenstein series $E_{2 k}
  (\tau ; 1, 1)$. As such it satisfies the relations {\cite{zagier-pp91}}
  \begin{equation}
    R_{2 k} (X) |_{2 - 2 k} (1 + S) = R_{2 k} (X) |_{2 - 2 k} (1 + U + U^2) =
    0. \label{eq:r1rel}
  \end{equation}
  Here, $U = T S$ with $T$ and $S$ as defined in (\ref{eq:TS}). On the other
  hand, by construction (\ref{eq:rx}),
  \[ R_{2 k} (X ; 1, 1) = R_{2 k} (X) |_{2 - 2 k} (1 - R) T^{- 1} . \]
  A brief calculation reveals that $(1 + U + U^2) T = T + R + S$. Hence, using
  both relations (\ref{eq:r1rel}), we find
  \[ R_{2 k} (X) |_{2 - 2 k} (1 - R) = R_{2 k} (X) |_{2 - 2 k} (1 + T + S) =
     R_{2 k} (X) |_{2 - 2 k} T, \]
  which proves the claim.
\end{proof}

The next example indicates that the definition (\ref{eq:rx}) of the
generalized Ramanujan polynomials is natural, by connecting them to period
polynomials of generalized Eisenstein series studied in Section
\ref{sec:periods}.

\begin{example}
  \label{eg:rperiod}As in Corollary \ref{cor:EtildaB}, let $k \geqslant 3$,
  and let $\chi$ and $\psi \neq 1$ be primitive Dirichlet characters modulo
  $L$ and $M$, respectively, such that $\chi (- 1) \psi (- 1) = (- 1)^k$.
  Then, with $R$ as in (\ref{eq:TS}),
  \[ \tilde{E}_k (X ; \chi, \psi) |_{2 - k} (1 - R^L) = - \chi (- 1) G (\chi)
     G (\psi) \frac{(2 \pi i / L)^k}{k - 1}  \frac{L}{M} R_k (L X + 1 ;
     \bar{\chi}, \bar{\psi}) . \]
  In other words, up to some scaling and a linear change of variables, the
  polynomial $R_k (X ; \bar{\chi}, \bar{\psi})$ is a period polynomial of the
  Eisenstein series $E_k (\tau ; \chi, \psi)$.
\end{example}

\begin{conjecture}
  \label{conj:unimod}For nonprincipal real Dirichlet characters $\chi$ and
  $\psi$, the polynomial $R_k (X ; \chi, \psi)$ is unimodular, that is, all
  its roots lie on the unit circle.
\end{conjecture}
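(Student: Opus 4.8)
The plan is to follow the two-step template that succeeded in the level-$1$ case treated in \cite{msw-zeroes} and \cite{ls-roots12}: first reduce unimodularity to a statement about a \emph{real self-inversive} polynomial, and then count the zeros that the associated trigonometric polynomial is forced to place on the circle $|X|=1$.

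First I would establish that $R_k(X;\chi,\psi)$ is self-inversive, which is exactly where the reality hypothesis enters. For nonprincipal $\chi,\psi$ one has $B_{0,\chi}=B_{0,\psi}=0$, so the Laurent terms in (\ref{eq:rx}) drop out and $R_k$ is a genuine polynomial of degree $k-2$. Substituting $X\mapsto 1/X$ and multiplying through by $X^{k-2}$ sends the $s$-th summand of (\ref{eq:rx}) to
\[ (-1)^{k-s}\,\frac{B_{s,\chi}}{s!}\,\frac{B_{k-s,\psi}}{(k-s)!}\left(\frac{X-1}{M}\right)^{k-s-1}(1-X^{s-1}). \]
Because $B_{s,\chi}$ vanishes unless $s\equiv\delta\pmod 2$, where $\chi(-1)=(-1)^{\delta}$, the sign $(-1)^{k-s}$ is constant across the nonvanishing terms, so $X^{k-2}R_k(1/X;\chi,\psi)=\pm R_k(X;\chi,\psi)$. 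Reality of $\chi$ and $\psi$ makes the coefficients $B_{s,\chi}B_{k-s,\psi}$ real, so $R_k$ is a real self-inversive polynomial; its zeros then occur in pairs $\{z,1/\bar z\}$, and unimodularity amounts precisely to excluding genuine off-circle pairs.

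Next I would turn the counting of on-circle zeros into a question about sign changes of a real trigonometric polynomial. Writing $X=e^{i\theta}$ and factoring out a half-integer power of $X$, self-inversivity guarantees that $g(\theta):=e^{-i(k-2)\theta/2}R_k(e^{i\theta};\chi,\psi)$ is, up to a fixed unimodular constant, real-valued; its zeros in $\theta\in(-\pi,\pi]$ are exactly the unit-circle zeros of $R_k$, and the polynomial is unimodular precisely when $g$ attains the full complement of $k-2$ real zeros. To locate these I would package the family into a generating function: by (\ref{eq:Bchi}) one has $\sum_{s}\frac{B_{s,\chi}}{s!}t^s=\sum_{a=1}^{L}\frac{\chi(a)\,t\,e^{at}}{e^{Lt}-1}$, and summing (\ref{eq:rx}) over $k$ then exhibits $R_k$ as a Taylor coefficient of a product of two such Bernoulli generating functions evaluated at arguments depending on $X$. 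Along $|X|=1$ this closed form should collapse to a ratio of trigonometric functions, exactly as the right-hand sides in Section \ref{sec:funres} and Remark \ref{rk:phifun-12} did for $\psi_{2m}$, reducing the zero count to an explicit transcendental problem. The identification of $R_k(LX+1;\bar\chi,\bar\psi)$ with a period polynomial of $E_k(\tau;\chi,\psi)$ in Example \ref{eg:rperiod} is the bridge that makes this closed form modular-invariant, and hence well-behaved, on the circle.

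The hard part will be the final sign-counting step. In the level-$1$ case the governing transcendental function is built from $t/(e^t-1)$, whose Bernoulli coefficients have essentially monotone magnitude, and a dominant-term (Rouch\'e-type) comparison suffices; this is the mechanism behind \cite{cfi-ppz} and \cite{er-ppz13}. For general real characters the generalized Bernoulli numbers $B_{s,\chi}$ carry no such simple sign or growth pattern, so the interlacing and dominant-term arguments do not transfer verbatim, and indeed the numerical evidence collected in this section suggests that for some pairs only \emph{most} of the roots lie on the circle. I therefore expect the genuine obstacle to be a uniform estimate---valid in $k$, $L$, and $M$---showing that a single trigonometric factor of the closed form dominates the remainder on $|X|=1$, thereby driving $g(\theta)$ through zero the maximal number of times. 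Proving such an estimate, or isolating the precise arithmetic hypotheses on $\chi,\psi$ under which it holds (thereby sharpening the bare reality assumption), is where the real difficulty lies.
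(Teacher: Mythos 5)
You are attempting to prove something the paper itself does not prove: the statement is labeled a \emph{conjecture}, and the paper's only support for it is numerical verification (all $k \leqslant 50$, characters of modulus up to $100$) together with two pieces of structural evidence --- the self-inversivity of $R_k(X;\chi,\psi)$ for real characters (Proposition \ref{prop:rsi}), which by Cohn's criterion \cite{cohn-roots22} is merely a \emph{necessary} condition for unimodularity, and the identification of $R_k$ as a period polynomial of an Eisenstein series (Example \ref{eg:rperiod}). Your first step is correct and essentially reconstructs Proposition \ref{prop:rsi}: for nonprincipal $\chi,\psi$ one indeed has $B_{0,\chi}=B_{0,\psi}=0$ so the Laurent terms vanish, and the parity vanishing of $B_{s,\chi}$ makes the sign $(-1)^{k-s}$ constant over the surviving terms, so the exceptional cases in the paper's proof do not even arise. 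Up to that point you agree with the paper.

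The genuine gap is the one you name yourself: the entire content of unimodularity is the final zero-counting estimate, which your proposal leaves open, so this is a research plan rather than a proof --- appropriately so, since the statement is open. Two further cautions on the plan itself. First, the hoped-for collapse to ``a ratio of trigonometric functions'' on $|X|=1$ is optimistic: feeding (\ref{eq:Bchi}) into (\ref{eq:rx}) produces a double sum over residues $a \bmod L$, $b \bmod M$ weighted by $\chi(a)\psi(b)$, i.e.\ a character-weighted combination of cotangent-type kernels rather than the single quotient of sines appearing in Theorem \ref{thm:fun}; there is then no single dominant trigonometric factor in general, which is exactly why the dominant-term/interlacing mechanism of \cite{msw-zeroes}, \cite{cfi-ppz}, \cite{er-ppz13} does not transfer --- so the obstacle is likely structural, not just a matter of a uniform estimate. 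Second, a small misreading: the off-circle zeros the paper records (e.g.\ for $R_7(X;1,\chi_{35})$) all occur with a principal character, a case \emph{excluded} from the conjecture's hypotheses; within the conjecture's actual scope the numerics support full unimodularity, not ``most roots on the circle.'' If you were to sharpen the reality assumption as you suggest, you would be modifying the conjecture, not proving it.
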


We have verified Conjecture \ref{conj:unimod} numerically for all $k \leqslant
50$ and all characters of modulus up to $100$. We note that it follows from
Proposition \ref{prop:rrx} and the results in {\cite{msw-zeroes}} that all
nonreal zeros of $R_k (X ; 1, 1)$ lie on the unit circle. On the other hand,
it is conjectured (in equivalent form) in {\cite{lrr-secantzeta}} that all
roots of $R_k (X ; 1, \chi_{- 4})$ lie on the unit circle. Before giving
further evidence in support of Conjecture \ref{conj:unimod} as well as an
application, we indicate the conjectural situation in the cases $\chi = 1$ or
$\psi = 1$, which is not included above. In vague summary, it appears that
still most of the roots lie on the unit circle.

\begin{example}
  Let $\chi$ be a nonprincipal real Dirichlet character. Computations show
  that, at least for $k \leqslant 50$ and $\chi$ of modulus at most $100$, the
  polynomials $R_k (X ; \chi, 1)$ are unimodular, except when $\chi$ takes the
  same values as $\chi_3$, the unique character of conductor $3$. In the case
  $\chi = 1$, it was shown in {\cite{msw-zeroes}} that $R_{2 k} (X ; 1, 1)$,
  for $k \geqslant 2$, has exactly four distinct real roots (approaching $\pm
  2^{\pm 1}$) and that the remaining roots lie on the unit circle. On the
  other hand, it appears that $R_{2 k + 1} (X ; \chi_3, 1)$, for $k \geqslant
  3$, has exactly three distinct real roots ($- 1$ as well as two roots
  approaching $- 2^{\pm 1}$) and that the remaining roots again lie on the
  unit circle.
\end{example}

In the next example, we restrict to primitive characters for expositional
reasons and make use of the fact {\cite{zr-dirichlet}} that, for a given
conductor $M$, there is at most one primitive real Dirichlet character modulo
$M$ of each parity. We label even such characters as $M \upl$ and odd ones as
$M \um$. For instance, the label $8 \upl$ refers to the even real Dirichlet
character of conductor $8$.

\begin{example}
  The situation in the case of $R_k (X ; 1, \psi)$, with $\psi$ a real
  primitive character, is slightly more varied. For certain characters $\psi$,
  such as
  \[ 3 \um, 4 \um, 5 \upl, 8 \upm, 11 \um, 12 \upl, 13 \upl, 19 \um, 21 \upl,
     24 \upl, \ldots, \]
  the polynomial $R_k (X ; 1, \psi)$ appears to again be unimodular. For
  certain other characters $\psi$, such as
  \[ 1 \upl, 7 \um, 15 \um, 17 \upl, 20 \um, 23 \um, 24 \um, \ldots, \]
  we observe, at least for small $k$, that all nonreal roots lie on the unit
  circle. On the other hand, there remains a third group of exceptional
  characters $\psi$, namely $35 \um, 59 \um, 83 \um, 131 \um, 155 \um, 179
  \um, \ldots$ (we observe that in each listed case $\psi$ is odd), for which
  $R_k (X ; 1, \psi)$ can have nonreal zeros off the unit circle. Consider,
  for instance, the unique real primitive Dirichlet character $\chi_{35}$ of
  conductor $35$. Then the polynomial $R_7 (X ; 1, \chi_{35})$ has the seven
  roots (given to three decimal digits)
  \[ 1, \hspace{1em} 0.461 \pm 0.888 i, \hspace{1em} (- 0.657 \pm 0.922
     i)^{\pm 1} . \]
  While the first three listed roots have absolute value $1$, the last four
  have absolute value $1.132^{\pm 1}$. In each of the exceptional cases, we
  observed, as in the example of $R_7 (X ; 1, \chi_{35})$, at most four
  nonreal zeros off the unit circle (in light of Proposition \ref{prop:rsi},
  such zeros necessarily come in groups of four).
\end{example}

In order for all zeros of a polynomial $p (X) = a_0 + a_1 X + \cdots + a_n
X^n$, $a_n \neq 0$, to lie on the unit circle, it is a necessary condition
{\cite{cohn-roots22}}, {\cite{ls-roots12}}, that the polynomial is
self-inversive, that is, for some $\varepsilon$ with $| \varepsilon | = 1$,
$a_k = \varepsilon \overline{a_{n - k}}$ for $k = 0, 1, \ldots, n$. In support
of Conjecture \ref{conj:unimod}, we now observe that, for real characters,
$R_k (X ; \chi, \psi)$ is self-inverse with $\varepsilon = \pm 1$. In other
words, $R_k (X ; \chi, \psi)$ is reciprocal or anti-reciprocal depending on
the parity of $\psi$.

\begin{proposition}
  \label{prop:rsi}Let $\chi$ and $\psi$ be real Dirichlet characters. If $\chi
  (- 1) \psi (- 1) = (- 1)^k$, then
  \[ R_k (X ; \chi, \psi) = \psi (- 1) X^{k - 2} R_k (X^{- 1} ; \chi, \psi) .
  \]
\end{proposition}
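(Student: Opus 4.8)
The plan is to verify the self-inversive property directly from the explicit definition \eqref{eq:rx} of $R_k(X;\chi,\psi)$, using only the reality of the characters (so that $B_{s,\chi}$ and $B_{k-s,\psi}$ are real and $\chi=\bar\chi$, $\psi=\bar\psi$) together with the parity relation $\chi(-1)\psi(-1)=(-1)^k$. First I would substitute $X^{-1}$ for $X$ in each summand and clear denominators by multiplying through by $X^{k-2}$, the goal being to show term-by-term, or after reindexing $s\mapsto k-s$, that each contribution reappears up to the global factor $\psi(-1)$. The natural device is the standard symmetry of generalized Bernoulli numbers under the character's parity, namely $B_{n,\chi}=(-1)^n\chi(-1)B_{n,\chi}$, which forces $B_{n,\chi}=0$ whenever $n\not\equiv$ (the parity of $\chi$) mod $2$; for real $\chi$ this is exactly the statement that the only surviving $B_{n,\chi}$ have $(-1)^n=\chi(-1)$.

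The key computation is to track what happens to the factor $\left(\tfrac{X-1}{M}\right)^{k-s-1}(1-X^{s-1})$ under $X\mapsto X^{-1}$ followed by multiplication by $X^{k-2}$. Writing
\[
  X^{k-2}\left(\frac{X^{-1}-1}{M}\right)^{k-s-1}\bigl(1-X^{-(s-1)}\bigr),
\]
I would pull out the sign $(-1)^{k-s-1}$ from $(X^{-1}-1)^{k-s-1}=(-1)^{k-s-1}X^{-(k-s-1)}(X-1)^{k-s-1}$ and simplify the remaining powers of $X$. After collecting exponents, the $(1-X^{-(s-1)})$ factor contributes a sign and a shift that, upon reindexing the sum via $s\mapsto k-s$, recombines with the $(-1)^{k-s-1}$ and the Bernoulli parity signs to produce precisely the original summand scaled by a single overall constant. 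The reindexing is essential because it swaps the roles of the two Bernoulli factors $B_{s,\chi}/s!$ and $B_{k-s,\psi}/(k-s)!$, so I must check that the $\chi$ and $\psi$ contributions line up correctly even though they carry possibly different parities.

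\textbf{The main obstacle} I anticipate is the careful sign bookkeeping: there are several interacting sign sources — the $(-1)^{k-s-1}$ from $(X-1)^{k-s-1}$, the Bernoulli parity constraints $(-1)^s=\chi(-1)$ and $(-1)^{k-s}=\psi(-1)$ on the surviving terms, and the sign flip in $(1-X^{s-1})\mapsto -X^{-(s-1)}(1-X^{s-1})$ — and these must conspire to yield exactly $\psi(-1)$ and nothing more. The cleanest way to control this is to observe that on the support of the sum (where neither Bernoulli number vanishes) one has $(-1)^s=\chi(-1)$ and hence $(-1)^{k-s}=\psi(-1)$ by the hypothesis $\chi(-1)\psi(-1)=(-1)^k$; substituting these identities collapses the accumulated signs. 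I expect that after this substitution the verification reduces to matching a single global factor of $\psi(-1)$, confirming that $R_k(X;\chi,\psi)$ is reciprocal when $\psi$ is even and anti-reciprocal when $\psi$ is odd, exactly as claimed.
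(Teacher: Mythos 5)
Your overall strategy---direct verification from the definition (\ref{eq:rx}) using the parity of generalized Bernoulli numbers---is the same as the paper's, and your closing observation that on the support of the sum $(-1)^s=\chi(-1)$ and hence $(-1)^{k-s}=\psi(-1)$ is exactly the paper's key sign-collapse. But the reindexing $s\mapsto k-s$ that you call ``essential'' is both unnecessary and, as described, a step that would fail. Carrying out the substitution you set up, with $p_s(X)=(X-1)^{k-s-1}(1-X^{s-1})$, one gets
\[ X^{k-2}\,p_s(X^{-1}) \;=\; (-1)^{k-s-1}X^{-(k-s-1)}(X-1)^{k-s-1}\cdot\bigl(-X^{-(s-1)}\bigr)(1-X^{s-1})\cdot X^{k-2} \;=\; (-1)^{k-s}\,p_s(X), \]
because the exponents cancel exactly: $k-2-(k-s-1)-(s-1)=0$. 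There is no residual ``shift'' to absorb by reindexing; each summand is individually reciprocal or anti-reciprocal, and the proof closes term by term, which is precisely what the paper does. Had you actually reindexed, the Bernoulli product $B_{s,\chi}B_{k-s,\psi}$ would be matched against $B_{k-s,\chi}B_{s,\psi}$, and the factor $(X-1)^{k-s-1}$ against $(X-1)^{s-1}$; for $\chi\neq\psi$ these do not agree (e.g.\ for $s=1$ the polynomial parts are $0$ and $1-X^{k-2}$ respectively), so the recombination you describe cannot happen.

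The second gap is your unqualified claim that $B_{n,\chi}=0$ whenever $(-1)^n\neq\chi(-1)$: this is false in the exceptional case $\chi=1$, $n=1$, where $B_{1,1}=B_1=-\tfrac{1}{2}\neq 0$. The proposition allows principal characters---indeed the case $\chi=\psi=1$ is the one needed for Proposition \ref{prop:rrx}---so you must separately treat the terms $s=1$ with $\chi=1$ and $s=k-1$ with $\psi=1$. In each of these, the hypothesis $\chi(-1)\psi(-1)=(-1)^k$ forces the companion Bernoulli number ($B_{k-1,\psi}$, resp.\ $B_{k-1,\chi}$) to vanish unless the other character is also principal and $k=2$, a case one verifies directly; this is exactly how the paper handles it. Without this step your argument proves the symmetry only for nonprincipal $\chi$ and $\psi$.
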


\begin{proof}
  Temporarily, denote with
  \[ p_s (X) = (X - 1)^{k - s - 1} (1 - X^{s - 1}) \]
  one of the terms in (\ref{eq:rx}) contributing to $R_k (X ; \chi, \psi)$. It
  is simple to check that
  \[ X^{k - 2} p_s (X^{- 1}) = (- 1)^{k - s} p_s (X) . \]
  On the other hand, recall that, for any Dirichlet character $\chi$, $B_{s,
  \chi} = 0$ if $\chi (- 1) \neq (- 1)^s$, unless $\chi = 1$ and $s = 1$. It
  follows that $B_{k - s, \psi} = 0$ if $\psi (- 1) \neq (- 1)^{k - s}$,
  unless $\psi = 1$ and $s = k - 1$. In the latter case, when $\psi = 1$ and
  $s = k - 1$, we have $B_{s, \chi} = B_{k - 1, \chi} = 0$ because $\chi (- 1)
  = (- 1)^k$, unless $\chi = 1$ and $k = 2$, which may be checked separately.
  We have thus shown that $X^{k - 2} p_s (X^{- 1}) = \psi (- 1) p_s (X)$ for
  all $s$ that have a nonzero coefficient in (\ref{eq:rx}).
\end{proof}

\begin{example}
  \label{eg:LasEichler}The case $\psi = 1$ is of special interest, because it
  yields explicit formulas for Dirichlet $L$-values at integral arguments (of
  parity opposite to the Dirichlet character) in terms of Eichler integrals.
  Let $\chi$ be a primitive Dirichlet character and $k \geqslant 3$ such that
  $\chi (- 1) = (- 1)^k$. Applying Theorem \ref{thm:ppE} as in Corollary
  \ref{cor:EtildaB} yields
  \begin{eqnarray*}
    \frac{k - 1}{2 \pi i}  \tilde{E}_k (X ; \chi, 1) |_{2 - k} (1 - R^L) & = &
    G \left( \chi \right) (- 2 \pi i / L)^{k - 1} R_k (L X + 1 ; \bar{\chi},
    1)\\
    &  & + L (k - 1, \chi) \left( 1 - \left( L X + 1 \right)^{k - 2} \right)
    .
  \end{eqnarray*}
  Solving for $L (k - 1, \chi)$, we obtain formulas for these $L$-values in
  the spirit of {\cite{katayama-L74}}; see Example \ref{eg:katayama}. On the
  other hand, suppose that $\alpha$, with $\tmop{Im} (\alpha) > 0$, is a root
  of $R_k (\alpha ; \bar{\chi}, 1) = 0$, which is not a $(k - 2)$th root of
  unity. Then
  \begin{equation}
    L (k - 1, \chi) = \frac{k - 1}{2 \pi i (1 - \alpha^{k - 2})}  \left[
    \tilde{E}_k \left( \frac{\alpha - 1}{L} ; \chi, 1 \right) - \alpha^{k - 2}
    \tilde{E}_k \left( \frac{1 - 1 / \alpha}{L} ; \chi, 1 \right) \right],
    \label{eq:LasEichler}
  \end{equation}
  thus explicitly linking the $L$-value to values of the Eichler integral at
  algebraic points, as is studied for $\chi = 1$ in
  {\cite{gun-murty-rath2011}}. Note that by (\ref{eq:Etildaq}), as in Example
  \ref{eg:katayama},
  \[ \tilde{E}_k (\tau ; \chi, 1) = \frac{4 \pi i}{k - 1} \sum_{n =
     1}^{\infty} \frac{\chi (n)}{n^{k - 1}}  \frac{e^{2 \pi i n \tau}}{e^{2
     \pi i n \tau} - 1} . \]
  Hence (\ref{eq:LasEichler}) takes the entirely explicit form
  \[ L (k - 1, \chi) = \frac{2}{1 - \alpha^{k - 2}}  \sum_{n = 1}^{\infty}
     \frac{\chi (n)}{n^{k - 1}}  \left[ \frac{1}{1 - e^{2 \pi i n (1 - \alpha)
     / L}} - \frac{\alpha^{k - 2}}{1 - e^{2 \pi i n (1 / \alpha - 1) / L}}
     \right], \]
  which expresses the $L$-value as a combination of two Lambert-type series.
  The special case $\chi = 1$, which is a consequence of Ramanujan's identity
  (\ref{eq:rhohat1}), has recently been studied in
  {\cite{gun-murty-rath2011}}. Specifically, using the notation
  \[ F_k (z) \assign \frac{k i}{4 \pi} \tilde{E}_{k + 1} (z ; 1, 1) = \sum_{n
     = 1}^{\infty} \frac{1}{n^k}  \frac{e^{2 \pi i n \tau}}{1 - e^{2 \pi i n
     \tau}}, \]
  it is shown in {\cite[Theorem 1.1]{gun-murty-rath2011}} that the numbers
  \[ F_{2 k + 1} (\beta) - \beta^{2 k} F_{2 k + 1} (- 1 / \beta) = \frac{(2 k
     + 1) i}{4 \pi} \left[ \tilde{E}_{2 k + 2} (\beta ; 1, 1) - \beta^{2 k}
     \tilde{E}_{2 k + 2} (- 1 / \beta ; 1, 1) \right] \]
  are transcendental for every algebraic $\beta \in \mathcal{H}$ with at most
  $2 k + 2 + \delta$ exceptions. Here $\delta = 0, 1, 2, 3$ according to
  whether $\gcd (k, 6) = 1, 2, 3$ or $6$, respectively. Examples of such
  exceptional values are $\beta = i$, if $k$ is even, and $\beta = e^{\pi i /
  3}, e^{2 \pi i / 3}$, if $k$ is divisible by $3$. In each of these three
  cases, however, the combination $F_{2 k + 1} (\beta) - \beta^{2 k} F_{2 k +
  1} (- 1 / \beta)$ vanishes. As pointed out in {\cite{gun-murty-rath2011}},
  the existence of further exceptional $\beta$ is very unlikely and would
  imply that $\zeta (2 k + 1)$ is an algebraic linear combination of $1$ and
  $\pi^{2 k + 1}$. This conclusion relies on the results of
  {\cite{msw-zeroes}}, by which the only roots of unity that are zeros of the
  corresponding period polynomial are $\pm i$, if $k$ is even, and $\pm e^{\pi
  i / 3}, \pm e^{2 \pi i / 3}$, if $k$ is divisible by $3$. The above results
  indicate that similar transcendence results should hold for the more general
  Eichler integrals $\tilde{E}_k (\tau ; \chi, 1)$ in place of $\tilde{E}_k
  (\tau ; 1, 1)$, in which case the arithmetic nature of Dirichlet $L$-values
  of ``wrong'' parity is concerned.
\end{example}

\begin{remark}
  It appears that the observations in this section can be extended further.
  For instance, in the case of imaginary Dirichlet characters, one finds that
  most of the zeros lie on the unit circle, if one considers instead of $R_k
  (X ; \chi, \psi)$ its real or imaginary part (that is, the polynomial with
  coefficients which are the real or imaginary parts of the coefficients of
  $R_k (X ; \chi, \psi)$). Finally, generalizing (\ref{eq:rx}), one may
  consider the polynomial
  \[ \sum_{s = 0}^k \frac{B_{s, \chi}}{s!} \frac{B_{k - s, \psi}}{\left( k - s
     \right) !} \left( \frac{X - 1}{m} \right)^{k - s - 1} (1 - X^{s - 1}), \]
  where $m = a M$ for some integer $a$. In the spirit of Example
  \ref{eg:rperiod}, this polynomial corresponds to the period polynomial in
  Corollary \ref{cor:EtildaB} when $n = a L$. It again appears that most roots
  of these polynomials lie on the unit circle.
\end{remark}

As a second, and possibly more direct, generalization of the Ramanujan
polynomials (\ref{eq:RkX}), we briefly also consider
\begin{equation}
  S_k (X ; \chi, \psi) = \sum_{s = 0}^k \frac{B_{s, \chi}}{s!} \frac{B_{k - s,
  \psi}}{(k - s) !} \left( \frac{L X}{M} \right)^{k - s - 1}, \label{eq:sx}
\end{equation}
where $\chi$ and $\psi$ are Dirichlet characters modulo $L$ and $M$. The two
generalizations (\ref{eq:rx}) and (\ref{eq:sx}) are related by
\[ S_k (X ; \chi, \psi) |_{2 - k} (1 - R^L) = R_k (L X + 1 ; \chi, \psi) . \]
By (\ref{eq:rhohatEtilda}) and Corollary \ref{cor:rhohat}, we have that, for
$k \geqslant 3$ and primitive nonprincipal Dirichlet characters $\chi$ and
$\psi$ such that $\chi (- 1) \psi (- 1) = (- 1)^k$,
\[ \tilde{E}_k (X ; \chi, \psi) - \psi (- 1) X^{k - 2} \tilde{E}_k (- 1 / X ;
   \psi, \chi) = - \chi (- 1) G \left( \chi \right) G (\psi) \frac{(2 \pi i /
   L)^k}{k - 1} S_k (X ; \bar{\chi}, \bar{\psi}), \]
which expresses the $S_k (X ; \chi, \psi)$ as period polynomials as well. In
general, these polynomials are not self-inverse and therefore cannot be
unimodular.

\begin{conjecture}
  For nonprincipal real Dirichlet characters $\chi$, all nonzero roots of the
  polynomial $S_k (X ; \chi, \chi)$ lie on the unit circle.
\end{conjecture}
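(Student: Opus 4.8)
The plan is to attack this conjecture within the same framework that governs the level $1$ results of \cite{msw-zeroes}, \cite{ls-roots12}, \cite{cfi-ppz} and \cite{er-ppz13}, by first realizing $S_k(X;\chi,\chi)$ as a genuine period polynomial and then counting the real zeros of an associated real trigonometric polynomial. First I would record that, since $\chi$ is real, the displayed identity immediately preceding the conjecture, together with (\ref{eq:rhohatEtilda}) and Corollary \ref{cor:rhohat}, identifies $S_k(X;\chi,\chi)$, up to the nonzero scalar $-\chi(-1)G(\chi)^2(2\pi i/L)^k/(k-1)$, with the period polynomial $\hat\rho_E(I)$ of the Eisenstein series $E=E_k(\tau;\chi,\chi)$. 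This places the problem squarely in the setting where unimodularity has been established in level $1$, and it is also what forces $k$ to be even: a term in (\ref{eq:sx}) is nonzero only when $\chi(-1)=(-1)^s=(-1)^{k-s}$, so that $(-1)^k=1$.

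Next I would establish that $S_k(X;\chi,\chi)$ is self-inversive, which is the necessary condition of \cite{cohn-roots22}, \cite{ls-roots12} for all roots to lie on $|X|=1$. This is immediate from the symmetry of the coefficients: the coefficient $\tfrac{B_{s,\chi}}{s!}\tfrac{B_{k-s,\chi}}{(k-s)!}$ of $X^{k-s-1}$ in (\ref{eq:sx}) is invariant under $s\mapsto k-s$, while that substitution sends the exponent $k-s-1$ to $s-1$. Hence $X^{k-2}S_k(X^{-1};\chi,\chi)=S_k(X;\chi,\chi)$, exactly as in Proposition \ref{prop:rsi} with $\psi=\chi$ real. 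After factoring out the power of $X$ arising when the boundary coefficients vanish (which accounts for the ``nonzero roots'' qualifier and occurs precisely when $\chi$ is even, leaving a reciprocal polynomial in only odd powers), the function $W(\theta):=e^{-i(k-2)\theta/2}S_k(e^{i\theta};\chi,\chi)$ is real-valued on the unit circle.

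The heart of the argument is then to show that $W(\theta)$ has as many real zeros on a fundamental interval as the degree of $S_k$, which by self-inversivity forces every root onto the circle. To make $W$ accessible I would use the generating function (\ref{eq:Bchi}): writing $g_\chi(u)=\sum_{a=1}^L\chi(a)\,u\,e^{au}/(e^{Lu}-1)$, the Cauchy product identity yields the compact representation
\[
  S_k(X;\chi,\chi)=\frac{1}{X}\,[u^k]\,g_\chi(u)\,g_\chi(uX),
\]
which is the character analogue of the kernels $1/((e^u-1)(e^{uX}-1))$ exploited in \cite{msw-zeroes} and \cite{ls-roots12}. Substituting $X=e^{i\theta}$ and passing to a contour integral, the plan is to rewrite $W(\theta)$ as a real sum whose sign at the relevant sample points (for instance at roots of unity, and in the limits $\theta\to0,\pi$) can be read off, and then to produce the required number of sign changes by the intermediate value theorem.

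The hard part will be this final zero count, that is, controlling the signs and relative sizes of the generalized Bernoulli numbers $B_{s,\chi}$ --- equivalently, the critical values $L(s,\chi)$ --- uniformly in $s$ for an arbitrary real character $\chi$. In the principal-character case the rapid growth of $|B_{2s}|$ and the classical sign pattern of the $B_{2s}$ drive the argument of \cite{msw-zeroes}, but for general real $\chi$ these quantities lack a comparably clean monotonicity. The examples in the text where $R_k(X;1,\psi)$ acquires off-circle zeros show that such a count can genuinely fail for degenerate characters; the expectation here is that the diagonal choice $\psi=\chi$, through the self-inversivity just proved, is exactly what rules out the escaping groups of four roots, and the main technical task is to convert that heuristic into a valid sign-change estimate.
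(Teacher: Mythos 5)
This statement is a \emph{conjecture} in the paper: the authors offer no proof, only the remark that they verified it numerically for $k \leqslant 50$ and real characters of modulus up to $100$. So there is no paper proof to compare against, and the relevant question is whether your proposal actually settles the conjecture. It does not, and you essentially concede this yourself: the decisive step --- showing that $W(\theta) = e^{-i(k-2)\theta/2} S_k(e^{i\theta};\chi,\chi)$ has the full complement of sign changes --- is deferred as ``the main technical task.'' Everything you establish before that point is correct but only assembles necessary conditions and known structure: the identification of $S_k(X;\chi,\chi)$, up to the scalar $-\chi(-1)G(\chi)^2(2\pi i/L)^k/(k-1)$, with $\hat{\rho}_E(I)$ via the display preceding the conjecture; the parity argument forcing $k$ even; the self-inversivity $X^{k-2}S_k(X^{-1};\chi,\chi) = S_k(X;\chi,\chi)$ (your coefficient symmetry computation is right, and it is a genuinely useful observation, since the paper notes that off the diagonal the $S_k$ are in general \emph{not} self-inversive); and the kernel representation $S_k(X;\chi,\chi) = X^{-1}[u^k]\, g_\chi(u)\, g_\chi(uX)$, which checks out by the Cauchy product applied to (\ref{eq:Bchi}). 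But self-inversivity is only the necessary condition of \cite{cohn-roots22}, \cite{ls-roots12}; it does not begin to force unimodularity.

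The missing zero count is moreover a genuine obstruction, not a routine adaptation of \cite{msw-zeroes}. In level $1$ the argument is driven by the rapid growth and strict sign alternation of the Bernoulli numbers $B_{2s}$, which lets a single dominant term control the sign of the associated trigonometric sum at each sample point. For a general real nonprincipal $\chi$, the numbers $B_{s,\chi}$ --- equivalently, by (\ref{eq:LB}), the values $L(s,\bar{\chi})$ up to explicit factors --- exhibit no comparably uniform sign pattern, and the paper's own data show that soft structural arguments cannot suffice: the closely related polynomials $R_k(X;1,\psi)$ are self-inversive period polynomials of the same Eisenstein family, yet $R_7(X;1,\chi_{35})$ has four zeros off the unit circle. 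So any proof must extract something quantitative from the diagonal choice $\psi=\chi$ beyond self-inversivity, and your proposal does not indicate what that is. Two smaller points: the conjecture is stated for all nonprincipal real $\chi$, whereas your period-polynomial identification through (\ref{eq:rhohatEtilda}) and Corollary \ref{cor:rhohat} requires $\chi$ primitive, so imprimitive real characters would need separate treatment; and your claim that the factored power of $X$ occurs ``precisely when $\chi$ is even'' should be stated with the accompanying observation that $B_{0,\chi}=0$ kills the $s=0$ and $s=k$ terms of (\ref{eq:sx}), so that $S_k$ is an honest polynomial (of odd powers only, divisible by $X$, in the even case). In summary: the preparatory steps are sound, but the conjecture remains open both in the paper and in your proposal.
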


We have verified this conjecture numerically for $k \leqslant 50$ and
characters $\chi$ of modulus at most $100$. In the case $\chi = 1$, we recall
that it was shown in {\cite{msw-zeroes}} that the polynomials $S_{2 k} (X ; 1,
1) = R_{2 k} (X)$ have all their nonreal zeroes on the unit circle. On the
other hand, for $\chi = 1$, the polynomial $S_{2 k} (X ; 1, 1)$ is only the
odd part of the period polynomial and it was recently proved in
{\cite{ls-roots12}} that the full period polynomial (\ref{eq:rhohat1}) is
indeed unimodular.

\section{Relation to sums considered by Ramanujan}\label{sec:history}

\subsection{Sums of level 1}\label{sec:rama1}

Ramanujan famously recorded (see {\cite{berndt-modular77}} or {\cite[p.
276]{berndtII}}, as well as the references therein) the formula
\begin{eqnarray}
  &  & \alpha^{- m} \left\{ \frac{\zeta (2 m + 1)}{2} + \sum_{n = 1}^{\infty}
  \frac{n^{- 2 m - 1}}{e^{2 \alpha n} - 1} \right\} = (- \beta)^{- m} \left\{
  \frac{\zeta (2 m + 1)}{2} + \sum_{n = 1}^{\infty} \frac{n^{- 2 m - 1}}{e^{2
  \beta n} - 1} \right\} \nonumber\\
  &  & - 2^{2 m} \sum_{n = 0}^{m + 1} (- 1)^n \frac{B_{2 n}}{(2 n) !} 
  \frac{B_{2 m - 2 n + 2}}{(2 m - 2 n + 2) !} \alpha^{m - n + 1} \beta^n, 
  \label{eq:rama}
\end{eqnarray}
where $\alpha$ and $\beta$ are positive numbers with $\alpha \beta = \pi^2$
and $m$ is any nonzero integer. Rewriting equation (\ref{eq:rama}) using
\[ \frac{1}{e^x - 1} = \frac{1}{2} \cot \left( \frac{x}{2} \right) -
   \frac{1}{2}, \]
slightly shifting the value of $m$ and setting $\beta = \pi i \tau$, and
therefore $\alpha = - \pi i / \tau$, we obtain, as in
{\cite{berndt-dedekindsum76}}, for integers $m \neq 1$,
\[ \tau^{2 m - 2} \xi_{2 m - 1} (- \tfrac{1}{\tau}) - \xi_{2 m - 1} (\tau) =
   (- 1)^m (2 \pi)^{2 m - 1} \sum_{n = 0}^m \frac{B_{2 n}}{(2 n) !} 
   \frac{B_{2 m - 2 n}}{(2 m - 2 n) !} \tau^{2 n - 1}, \]
where $\xi_s$ is the cotangent Dirichlet series defined in (\ref{eq:defcz}).

As in the case of the secant Dirichlet series, discussed in Sections
\ref{sec:eisenstein} and \ref{sec:periods}, the cotangent Dirichlet series is
essentially an Eichler integral. Indeed, one easily checks that, for integral
$s$,
\[ \frac{i}{2} \xi_s (\tau) = \frac{1}{2} \zeta (s) + \sum_{n = 1}^{\infty}
   \frac{\sigma_s (n)}{n^s} q^n . \]
When $s = 2 k - 1$ is odd, the right-hand side visibly is, up to scaling and a
constant term, an Eichler integral of the weight $2 k$ Eisenstein series
\[ E_{2 k} (\tau ; 1, 1) = 2 \zeta (2 k) + \frac{2 (2 \pi i)^{2 k}}{\Gamma (2
   k)} \sum_{n = 1}^{\infty} \sigma_{2 k - 1} (n) q^n, \]
which is modular with respect to the full modular group.

\begin{remark}
  Proceeding as in Section \ref{sec:sqrt}, though matters simplify because the
  level is $1$, we conclude that $\xi_{2 m + 1} ( \sqrt{r})$ is a rational
  multiple of $\pi^{2 m + 1} \sqrt{r}$ whenever $r$ is a positive rational
  number (assuming, for convergence, that $\sqrt{r}$ is irrational). Explicit
  special cases of this observation may be found, for instance, in
  {\cite{berndt-dedekindsum76}}.
\end{remark}

\subsection{Sums of level 4}

Ramanujan also found {\cite[Entry 21(iii), p. 277]{berndtII}} the identity
\begin{eqnarray}
  &  & \alpha^{- m + 1 / 2} \left\{ \tfrac{1}{2} L (\chi_{- 4}, 2 m) +
  \sum_{n = 1}^{\infty} \frac{\chi_{- 4} (n)}{n^{2 m} (e^{\alpha n} - 1)}
  \right\} = \frac{(- 1)^m \beta^{- m + 1 / 2}}{2^{2 m + 1}} \sum_{n =
  1}^{\infty} \frac{\tmop{sech} (\beta n)}{n^{2 m}} \nonumber\\
  &  & + \frac{1}{4} \sum_{n = 0}^m \frac{(- 1)^n}{2^{2 n}}  \frac{E_{2
  n}}{(2 n) !}  \frac{B_{2 m - 2 n}}{(2 m - 2 n) !} \alpha^{m - n} \beta^{n +
  1 / 2},  \label{eq:ramasec}
\end{eqnarray}
which was first proved in print by Chowla {\cite{chowla-series}}, where
$\alpha$ and $\beta$ are positive numbers with $\alpha \beta = \pi^2$ and $m$
is any integer. The goal of this section is to relate (\ref{eq:ramasec}) to
the present discussion of the modular properties of the secant Dirichlet
series. It will transpire that (\ref{eq:ramasec}) is an explicit version of
Theorem \ref{thm:ppE} in the case $\chi = 1$ and $\psi = \chi_{- 4}$. In other
words, equation (\ref{eq:ramasec}) encodes how $\psi_{2 m}$ transforms under
$S$, as defined in (\ref{eq:TS}). From here, we can then work out the exact
way in which $\psi_{2 m}$ transforms under any transformation of $\tmop{SL}_2
(\mathbbm{Z})$, though we do not develop the details here (but see Remark
\ref{rk:phifun23} for indications). On the other hand, we demonstrate that we
may use (\ref{eq:ramasec}) as the basis for yet another derivation of the
functional equation (\ref{eq:phifun}).

With $\alpha = \pi i \tau$, and proceeding as in the case of (\ref{eq:rama}),
we can write (\ref{eq:ramasec}) as
\begin{equation}
  \tau^{2 m - 1} \psi_{2 m} (- \tfrac{1}{\tau}) = \hat{\psi}_{2 m} (\tau) -
  h_{2 m} (\tau), \label{eq:phifunS}
\end{equation}
where
\[ \hat{\psi}_{2 m} (\tau) = \sum_{n = 1}^{\infty} \chi_{- 4} (n) \frac{\cot (
   \tfrac{\pi n}{2} \tau)}{(n / 2)^{2 m}} \]
and $h_{2 m} (\tau)$ is the rational function defined in (\ref{eq:def-h}).
Since $\hat{\psi}_{2 m} (\tau + 2) = \hat{\psi}_{2 m} (\tau)$, we find that
\[ \psi_{2 m} |_{1 - 2 m} S T^{- 2} = \hat{\psi}_{2 m} - h_{2 m} |_{1 - 2 m}
   T^{- 2} . \]
Using further that $B = S T^{- 2} S^{- 1}$, with $B$ as in (\ref{eq:AB}), we
find that
\begin{eqnarray}
  \psi_{2 m} |_{1 - 2 m} B & = & \psi_{2 m} - h_{2 m} |_{1 - 2 m} (S^{- 1} +
  T^{- 2} S^{- 1}) \nonumber\\
  & = & \psi_{2 m} + h_{2 m} |_{1 - 2 m} (S + T^{- 2} S) . 
  \label{eq:phifun-h}
\end{eqnarray}
which, in expanded form, is precisely the functional equation
(\ref{eq:phifun2}).

\begin{remark}
  \label{rk:phifun23}The functional equation satisfied by $\psi_{2 m}$ is
  given in {\cite{lrr-secantzeta}} in somewhat different form. Indeed,
  comparing the rational functions involved in the functional equations, we
  observe that
  \begin{eqnarray}
    &  & (\pi i)^{2 m} \sum_{n = 0}^m \frac{2^{2 n - 1} B_{2 n} E_{2 m - 2
    n}}{(2 n) ! (2 m - 2 n) !} \tau^{2 m - 2 n} \left[ 1 - (2 \tau + 1)^{2 n -
    1} \right] \nonumber\\
    & = & (\pi i)^{2 m} \sum_{n = 0}^m \frac{2^{2 n - 1} B_{2 n} (
    \tfrac{1}{2}) E_{2 m - 2 n}}{(2 n) ! (2 m - 2 n) !}  (\tau + 1)^{2 m - 2
    n} \left[ 1 - \left( 2 \tau + 1 \right)^{2 n - 1} \right], 
    \label{eq:phifun-rats}
  \end{eqnarray}
  where the first sum is the right-hand side of (\ref{eq:phifun2}) and the
  second sum is, up to notation, the one derived in {\cite{lrr-secantzeta}}.
  To see that these two rational functions indeed coincide, one may proceed as
  in Remark \ref{rk:phifun-12}. On the other hand, in order to further
  illustrate how $\psi_{2 m}$ transforms under the full modular group, we now
  sketch a proof of their equivalence, which, ultimately, derives from $B = S
  T^{- 2} S^{- 1} = (T S T)^2$.
  
  We can easily check that $\psi_{2 m} |_{1 - 2 m} T$ is given by
  \[ \psi_{2 m} (\tau + 1) = \sum_{n = 1}^{\infty} (- 1)^n \frac{\sec (\pi n
     \tau)}{n^{2 m}} = \frac{1}{2^{2 m - 1}} \psi_{2 m} (2 \tau) - \psi_{2 m}
     (\tau) . \]
  It follows that
  \[ \psi_{2 m} |_{1 - 2 m} T S = \hat{\psi}_{2 m} (\tau / 2) - \hat{\psi}_{2
     m} (\tau) - h_{2 m} (\tau / 2) + h_{2 m} (\tau) . \]
  Since $\cot \left( z / 2 \right) - \cot \left( z \right) = \csc (z)$, this
  equals
  \[ \psi_{2 m} |_{1 - 2 m} T S = \sum_{n = 1}^{\infty} \chi_{- 4} (n)
     \frac{\csc ( \tfrac{\pi n}{2} \tau)}{(n / 2)^{2 m}} - h_{2 m} (\tau / 2)
     + h_{2 m} (\tau), \]
  which then implies that
  \begin{eqnarray*}
    \psi_{2 m} |_{1 - 2 m} T S T^2 & = & - \sum_{n = 1}^{\infty} \chi_{- 4}
    (n) \frac{\csc ( \tfrac{\pi n}{2} \tau)}{(n / 2)^{2 m}} - h_{2 m} (\tau /
    2 + 1) + h_{2 m} (\tau + 2)\\
    & = & - \psi_{2 m} |_{1 - 2 m} T S + [h_{2 m} (\tau) - h_{2 m} (\tau /
    2)] |_{1 - 2 m} (1 + T^2) .
  \end{eqnarray*}
  Since $B = (T S T)^2 = (T S T^2) S T$, we thus arrive at
  \begin{equation}
    \psi_{2 m} |_{1 - 2 m} B = \psi_{2 m} + [h_{2 m} (\tau) - h_{2 m} (\tau /
    2)] |_{1 - 2 m} (S T + T^2 S T) . \label{eq:phifun-h2}
  \end{equation}
  Comparing (\ref{eq:phifun-h2}) with (\ref{eq:phifun-h}), we have shown that
  \[ h_{2 m} (\tau) |_{1 - 2 m} (S + T^{- 2} S) = [h_{2 m} (\tau) - h_{2 m}
     (\tau / 2)] |_{1 - 2 m} (S T + T^2 S T), \]
  which, upon expanding and using the relation $B_n ( \tfrac{1}{2}) = - (1 -
  2^{1 - n}) B_n$, results in the desired equality (\ref{eq:phifun-rats}).
\end{remark}

Having thus come full circle, we close with remarking that a number of further
infinite sums, similar in shape to (\ref{eq:defsz}) and (\ref{eq:defcz}), with
trigonometric summands and modular properties are discussed in {\cite[Chapter
14]{berndtII}} and the references therein.

\section{Conclusion}

We have reviewed the well-known fact that, among similar sums, the cotangent
and secant Dirichlet series of appropriate parity are Eichler integrals of
Eisenstein series of level $1$ and $4$, respectively. Their functional
equations, recorded by Ramanujan in his notebooks, are thus instances of the
modular transformation properties of Eichler integrals in general, with their
precise form determined by the corresponding period polynomials. This has lead
us to explicitly compute period polynomials of Eisenstein series of higher
level. Motivated by recent results {\cite{gun-murty-rath2011}},
{\cite{msw-zeroes}}, {\cite{lr-unimod13}}, {\cite{ls-roots12}} on the zeros of
Ramanujan polynomials, which arise in the level $1$ case, we observe that the
generalized Ramanujan polynomials appear to also be (nearly) unimodular. On
the other hand, it was recently shown in {\cite{cfi-ppz}} and
{\cite{er-ppz13}} that the nontrivial zeros of period polynomials of modular
forms, which are Hecke eigenforms of level $1$, all lie on the unit circle.
Our observations for Eisenstein series of higher level suggest that it could
be interesting to extend the results in {\cite{cfi-ppz}}, {\cite{er-ppz13}} to
the case of higher level.

\begin{acknowledgements}
We thank Matilde Lal\'{\i}n, Francis Rodrigue and
Mathew Rogers for sharing the preprint {\cite{lrr-secantzeta}}, which
motivated the present work. We are very grateful to Alexandru Popa for making
us aware of the recent paper {\cite{popa-period}} and for very helpful
discussions, as well as to Bernd Kellner for comments on an earlier version of
this paper. Finally, the second author would like to thank the
Max-Planck-Institute for Mathematics in Bonn, where part of this work was
completed, for providing wonderful working conditions.
\end{acknowledgements}


\end{document}